\newcounter{mainthm}
\newtheorem{mainthm}[mainthm]{Theorem}
\newtheorem{mainprop}[mainthm]{Proposition}
\newtheorem{thm}{Theorem}[section]
\newtheorem{lem}[thm]{Lemma}
\newtheorem{prop}[thm]{Proposition}
\newtheorem{cor}[thm]{Corollary}
\newtheorem{corollary}[thm]{Corollary}
\theoremstyle{definition}
\newtheorem{defn}[thm]{Definition}
\newtheorem{definition}[thm]{Definition}
\theoremstyle{remark}
\newtheorem{rmk}[thm]{Remark}
\newtheorem{remark}[thm]{Remark}
\newtheorem{example}[thm]{Example}
\crefname{claim}{claim}{claims}
\crefname{prop}{proposition}{propositions}
\crefname{prop}{proposition}{propositions}
\crefname{assumption}{assumption}{assumptions}
\def\C{\mathbb{C}}
\def\N{\mathbb{N}}
\def\P{\mathbb{P}}
\def\R{\mathbb{R}}
\def\Z{\mathbb{Z}}
\DeclareMathOperator{\Aff}{Aff}
\DeclareMathOperator{\Alb}{Alb}
\DeclareMathOperator{\alb}{alb}
\DeclareMathOperator{\Cob}{Cob}
\DeclareMathOperator{\CH}{CH}
\DeclareMathOperator{\deff}{def}
\DeclareMathOperator{\GL}{GL}
\DeclareMathOperator{\Hom}{Hom}
\DeclareMathOperator{\Jac}{Jac}
\DeclareMathOperator{\Pic}{Pic}
\newcommand{\into}{\hookrightarrow}
\def\inv{^{-1}}
\newcommand{\mcal}{\mathcal}
\newcommand{\restr}[1]{|_{#1}}
\newcommand{\set}[1]{\left\lbrace{#1}\right\rbrace}
\newcommand{\st}{\,\vert\,}
\newcommand{\tx}[1]{\text{#1}}
\newcommand{\vspan}[1]{\langle #1 \rangle}
\newcommand{\wdg}{\wedge}
\def\x{\times}
\newcommand{\xra}[1]{\xrightarrow{#1}}
\title{Dimensionality of tropical Chow groups}
\author{Álvaro Muñiz-Brea}
\date{}
\begin{document}

\maketitle

\begin{abstract}
    We show that the existence of non-zero tropical forms of degree at least two implies that the tropical Chow group of points of an integral affine manifold is infinite-dimensional. 
    This can be seen as a tropical analog of classical results of Mumford and Roitman for Chow groups of smooth (complex) projective algebraic varieties. 
    We also show that the existence of tropical 1-forms on integral affine surfaces does not imply infinite dimensionality by considering the case of a tropical Klein bottle.
\end{abstract}

\tableofcontents

\section{Introduction}
\subsection{Algebraic Chow groups}\label{sec:Chowgroupsintro}
A lot of information about the topology of a manifold can be understood via its homology.
Similarly, in algebraic geometry, one can try to understand a variety by studying its subvarieties modulo some equivalence relation.
One of the most common equivalence relations is \emph{rational equivalence}: $k$-dimensional subvarieties $V_\pm \subset Y$ of an algebraic variety $Y$ are said to be \emph{rationally equivalent} if there exists a $(k+1)$-dimensional subvariety $W \subset Y \x \P^1$ whose projection to $\P^1$ is dominant and such that\footnote{We denote by $\pm\infty \in \P^1$ the north and south pole of $\P^1 \cong S^2$. Any two points of $\P^1$ could be used, however.}
$$
V_\pm = W \cap (Y \x \set{\pm\infty}).
$$
One defines the \emph{Chow groups of $Y$} as the quotient
\begin{equation}\label{eq:Chowgroups}
    \CH_k(Y) := Z_k(X)/R_k(X)
\end{equation}
where $Z_k(X)$ is the free abelian group generated by the $k$-dimensional subvarieties of $Y$ and $R_k(X) \subset Z_k(X)$ is the subgroup generated by differences $V_+ - V_-$ for all rationally equivalent pairs $V_-,V_+$.

Chow groups are classical objects in algebraic geometry and have been extensively studied.
They are the subject of the Hodge conjecture and a series of conjectures due to Bloch and Beilinson.
When $Y$ is a curve the Abel-Jacobi theorem---recall it identifies the Jacobian of a curve with its Albanese variety--- proves these groups are isomorphic to $H_{2*}(Y) \oplus Jac(Y)$.
However, as soon as we increase the dimension these groups become notoriously difficult to compute.
One of the first results along these lines is a classical theorem of Mumford in the '60s \cite{mumford1969rational}.
Mumford showed that when a smooth complex projective surface $Y$ admits a non-zero $2$-form---that is, when $0 < \dim H^0(\mcal K_Y)$, where $\mcal K_Y$ is the canonical bundle of $Y$---the Chow group of points $\CH_0(Y)$ is \emph{infinite-dimensional} in the following sense:
\begin{defn}[\cite{mumford1969rational}]\label{def:infinitedimensional}
    Let $\CH_0(Y)_{\hom} \subset \CH_0(Y)$ be the subgroup of cycles of degree zero (that is, those linear combinations $\sum n_i p_i$ with $\sum n_i = 0$).
    We write $\mathbf{y} = (y_1,\dots, y_k) \in Y^k$ to denote points in the $k$-th product of $Y$ with itself.
    We say $\CH_0(Y)_{\hom}$ is \emph{infinite-dimensional} if the maps
    \begin{align*}
        Y^k \x Y^k &\to \CH_0(Y)_{\hom}\\
        (\mathbf{y}^+,\mathbf{y}^-)& \mapsto \sum_{i = 1}^k\left(y_i^+ - y_i^-\right)
    \end{align*}
    are not surjective for any $k$. 
\end{defn}
Mumford's result was later generalized by Roitman to higher dimensions \cite{roitman1971gamma}, showing that for any smooth projective algebraic variety $Y$ over an uncountable field of characteristic zero,  the existence of a non-zero $p$-form for some $p\geq2$ is enough to guarantee infinite-dimensionality of $\CH_0(Y)_{\hom}$.

The converse result---that is, whether the vanishing $H^0(\mcal K_Y) = 0$ implies that $\CH_0(Y)_{\hom}$ is finite-dimensional---has been conjectured by Bloch \cite{bloch1975k2}, and Bloch-Kas-Lieberman \cite{bloch1976zero} showed that this is true for surfaces with Kodaira dimension $\kappa(Y) < 2$.

 \subsection{Tropical Chow groups}
An analogous equivalence relation for tropical geometry has appeared in the literature before, see e.g. \cite[Definition 3.1]{allermann2016rational}.
Let us recall the equivalence relation therein (with a slight change of notation and nomenclature for convenience).
In the following definition, given a tropical cycle $C$ and a rational function $\phi: C \to \R$, we denote by $\phi \cdot C$ the \emph{divisor of $\phi$}.\footnote{A \emph{rational function} on a tropical cycle is, roughly speaking, a piecewise-linear function with integer slope. Its divisor $\phi \cdot C$ is the locus where $\phi$ changes slope. We refer to the same paper \cite{allermann2016rational} for precise definitions.}
\begin{definition}[\cite{allermann2016rational}]\label{def:AllermanRationalEquivalence}
    Let $B$ be a tropical cycle and consider a $k$-dimensional subcycle $V \subset B$.
    We say that  $V$ is \emph{(bounded) rationally equivalent to zero} on $B$ if there exists a $(k+1)$-dimensional tropical cycle $f:C^{k+1} \to B$ and a (bounded) rational function $\phi : C \to \R$ such that 
    $$
    V = f_*(\phi \cdot C).
    $$
    We say that $V_+ \sim V_-$ are \emph{rationally equivalent} if $V_+ - V_-$ is rationally equivalent to zero.
\end{definition}

The setting in this paper is much more restrictive. 
We limit our attention and results to a subclass of tropical cycles $B$ called \emph{tropical affine manifolds}: these are smooth manifolds together with an integrable lattice $T_\Z B \subset TB$ of tangent vectors.\footnote{An equivalent definition is a smooth manifold together with an atlas of charts whose transition functions belong to the group $\Aff_\Z(\R^n)$ of integral affine transformations of $\R^n$, i.e. those of the form $\mathbf{x} \mapsto A\mathbf{x} + \mathbf{x}_0, A \in \GL(n,\Z)$.}
Using \Cref{def:AllermanRationalEquivalence}, one can define the \emph{tropical Chow groups} just as in \Cref{eq:Chowgroups}, except now  $Z_k(B)$ is generated by $k$-dimensional \emph{tropical subvarieties} of $B$ and $R_k(B) \subset Z_k(B)$ by those differences $V_+ - V_- \in Z_k(B)$ such that $V_+ \sim V_-$.
Since we will only be working with the tropical Chow group of points (that is,  the case $k = 0$), it is sufficient to say that a $1$-dimensional tropical subvariety---also known as a \emph{tropical curve}---is a weighted, balanced rational polyhedral complex of pure dimension one \cite{mikhalkin2005enumerative} (see \Cref{fig:subfig1}).

We show in \Cref{lem:relations_equivalent} that the equivalence relation of \Cref{def:AllermanRationalEquivalence} is equivalent to the existence of a tropical curve in $B \x \R$ that is `horizontal at infinity' and asymptotic to $V_{\pm}$ at $\pm \infty \in \R$ (see \Cref{def:horizontal_at_infinity} for a precise definition and \Cref{fig:subfig2} for a geometric intuition).
The $0$-th tropical Chow group $\CH_0(B)$ is then the free abelian groups on points of $B$ modulo relations induced by tropical curves in $B\x\R$ that are horizontal at infinity. as depicted in \Cref{fig:subfig2}.

\begin{remark}
    Although we restrict the ambient space to be a tropical affine manifold (instead of an arbitrary tropical cycle like in \cite{allermann2016rational}), note that the generators of $\CH_*(B)$ are in fact arbitrary tropical cycles in $B$ (not only integral affine submanifolds of $B$).
\end{remark}

\begin{figure}
    \centering
    \begin{subfigure}[b]{0.45\textwidth}
        \centering
        
        \begin{tikzpicture}[thick, scale=0.8]
\draw[step=1cm,gray,very thin] (-4,-2) grid (2,4);
            
            \draw (-4,0) -- (-1,0)  -- (0,1) -- (0,-2);
            \draw (-1,0) -- (-1,-2);
            \draw (0,1) -- (1,4);
            \node at (-2,0.5) {$1$};
            \node at (-1.5,-1) {$1$};
            \node at (0.5,-0.5) {$2$};
            \node at (-1,1) {$1$};
            \node at (0,2.5) {$1$};
        \end{tikzpicture}

        \caption{Tropical curve inside $\R^2$. The numbers next to the edges indicate the weight of each edge. At each vertex, the weighted sum of the outpointing primitive vectors vanishes.}
        \label{fig:subfig1}
    \end{subfigure}
    \hfill
    \begin{subfigure}[b]{0.45\textwidth}
        \centering
        \hspace{-1cm}
        \begin{tikzpicture}[thick,scale=0.6]

\begin{scope}[scale = 2]
                \draw (0,-1.5) node (v1) {} -- (0,0.5) node (v2) {} -- (1,1.5) node (v3) {} ;
                \draw[dashed]	(1,1.5) node (v3) {} -- (1,-0.5) node (v5) {}  -- (0,-1.5);
            \end{scope}
            
            \begin{scope}[shift={(7,0)},scale=2]
                \draw (0,-1.5) node (v1) {} -- (0,0.5) -- (1,1.5) node (v4) {} -- (1,-0.5) node (v6) {}  -- cycle;
            \end{scope}

\draw  (v2) rectangle (v1);
            \draw  (v3) edge (v4);
            \draw[dashed]  (v5) edge (v6);

\draw[->,>=stealth', line width=1pt] (0,-4) -- (8,-4);
            \node[scale = 1.5] at (8,-4.5) {$\mathbb R$};
            \node[scale = 1.5] at (-0.5,-0.5) {$B$};
            
\draw[color=red] (1,-0.5) -- (3.5,-0.5) -- (3.5,-2) -- (3.5,-0.5) -- (4,0) -- (3.5,0.5) -- (1,0.5) -- (3.5,0.5) -- (3.5,2);
            \draw[color=red] (4,0) -- (5,0)  -- (7,0.5)  -- (8,1.5);
            \draw[color=red] (5,0) -- (6,-0.5) -- (5,-1.5) --  (6,-0.5) -- (7.5,-0.5);
            \draw[color=red] (7,0.5)  -- (8.5,0.5);

            \node at (0.5,0.5) {$p_1^-$};
            \node at (0.5,-0.5) {$p_2^-$};
            \node at (8,-0.5) {$p_1^+$};
            \node at (8.5,1) {$p_2^+$};
        \end{tikzpicture}

        \caption{Tropical curve in $B \x \R$ which is `horizontal at infinity'. It gives the relation $p_1^+ + p_2^+ = p_1^- + p_2^-$ in $\CH_0(B)$. As drawn, it could give a relation for both $B = \R^2$ and $B = T^2$.}
        \label{fig:subfig2}
    \end{subfigure}
    \caption{Examples of tropical curves}
    \label{fig:mainfigure}
\end{figure}

 \subsection{Main results}
Let $T_\Z^*B$ be the local system of abelian groups whose sections over $U \subset B$ are differential $1$-forms $\alpha \in H^0(T^*U)$ such that 
$$
\alpha(v)\in \Z,\quad v \in T_\Z U.
$$
Note that such sections are locally constant in affine coordinates.
Let $\wedge^p T^*_\Z B$ be the exterior power of $T^*_\Z B$; global sections $\omega \in H^0(\wedge^p T^*_\Z B)$ are called \emph{tropical $p$-forms}.
The following is the main result of this paper:

\begin{mainthm}[= \Cref{thm:mainthm}]\label{mainthm:mainthm}
    If a compact integral affine manifold $B$ admits a non-zero tropical $p$-form for some $p \geq 2$, the group $\CH_0(B)_{\hom}$ is infinite dimensional (in the sense of \Cref{def:infinitedimensional}).
\end{mainthm}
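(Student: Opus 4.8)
The plan is to mimic the classical Mumford--Roitman argument, replacing holomorphic $p$-forms and the pullback of the ``sum map'' on symmetric products by their tropical counterparts. The essential classical mechanism is this: if $\CH_0(Y)_{\hom}$ were finite-dimensional, witnessed by surjectivity of the map from $Y^k\times Y^k$ in \Cref{def:infinitedimensional}, then one obtains a rational map (over a large field, a dominant one after spreading out) from $Y^k\times Y^k$ onto a variety parametrizing the cycle classes, and the existence of a non-zero $p$-form with $p\geq 2$ forces a contradiction because such a form pulls back nontrivially to the product while simultaneously being forced to vanish by rational equivalence --- rationally equivalent $0$-cycles induce the same pullback of any $p$-form for $p\geq 1$, but the degrees of freedom in $Y^k\times Y^k$ versus the ``$H^0(\wedge^p)$--content'' cannot be reconciled. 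So first I would set up the tropical analog of the statement ``rationally equivalent points have the same image under the Abel--Jacobi-type map attached to a tropical $p$-form.''

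\textbf{Step 1: A tropical regulator.} For a tropical $p$-form $\omega\in H^0(\wedge^p T^*_\Z B)$ with $p\geq 2$, I would construct a homomorphism $\CH_0(B)_{\hom}\to A_\omega$ to some abelian group $A_\omega$ (a quotient of a group of $(p-1)$-currents, or concretely of $\R$-valued integrals along paths, modulo periods) that is nonzero whenever $\omega\neq 0$. Concretely: given a degree-zero cycle $\sum(p_i^+-p_i^-)$, choose paths $\gamma_i$ from $p_i^-$ to $p_i^+$; since $\omega$ is closed (locally constant in affine coordinates), ``$\int$'' of the $(p-1)$-form obtained by contracting $\omega$ is the wrong move --- instead I would pair $\omega$ with the $1$-chain $\sum\gamma_i$ only after wedging with a fixed auxiliary $(p-1)$-form, or more robustly define the invariant via the mixed-Hodge-theoretic/tropical intermediate-Jacobian target $H^0(\wedge^{p-1}T^*_\Z B)^\vee / \big(\text{image of }H_1(B;\Z)\big)$ paired against $\omega$. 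The key point to verify is invariance under tropical rational equivalence: if $W\subset B\times\R$ is a tropical curve horizontal at infinity giving the relation $\sum p_i^+=\sum p_i^-$, then the ``trace'' of $\omega$ (pulled back via the projection $B\times\R\to B$, contracted appropriately) integrates to zero over $W$ because the fibers of $W\to\R$ at $\pm\infty$ differ by the boundary and $\omega$ restricted to $W$ is exact in the relevant degree; balancing of $W$ is exactly what makes this integral well-defined and the stable-at-infinity condition is what kills the boundary terms. This is the tropical replacement for ``$0$-cycles rationally equivalent to zero have zero Abel--Jacobi image on $p$-forms.''

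\textbf{Step 2: Nontriviality via a local computation.} I would then show the regulator is nonzero when $\omega\neq 0$. Since $\omega$ is locally constant in affine coordinates, near a point where $\omega\neq 0$ one can find $p$ independent integral affine directions on which $\omega$ is nondegenerate; working in a coordinate chart $\cong$ open subset of $\R^n$ with $\omega=dx_{i_1}\wedge\cdots\wedge dx_{i_p}$, one picks explicit nearby points and explicit straight-line paths and computes the pairing to be a nonzero real (or to take infinitely many distinct values as the points vary). This is the tropical analog of pulling back $\omega$ to a symmetric product and seeing it is nonzero.

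\textbf{Step 3: Conclude by a dimension/cardinality count.} Finally, suppose for contradiction that $\CH_0(B)_{\hom}$ is finite-dimensional, i.e. the map $\phi_k:B^k\times B^k\to\CH_0(B)_{\hom}$ is surjective for some $k$. Composing with the regulator gives a surjection $B^k\times B^k\twoheadrightarrow A_\omega$, and by Step 1 this map factors through the ``regulator coordinates'' of the $2k$ points, hence factors through a map whose image has bounded ``rank'' --- whereas by Step 2, letting the points vary along the $p$ independent directions produces an image that is not contained in any finitely-generated-over-the-image-of-$H_1$ subgroup, or (cleaner) has positive ``dimension $\geq$ something that grows'', contradicting surjectivity from a fixed $B^k\times B^k$. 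The honest difficulty --- and where I expect the real work to be --- is \textbf{making ``infinite-dimensional'' bite in the tropical, piecewise-linear setting}: the classical proof crucially uses that a dominant rational map of algebraic varieties cannot kill a global $p$-form by a theorem on the generic rank, and there is no algebraic geometry here, only PL/affine geometry. I would need either (a) a genuinely combinatorial argument that the family of regulator values of $\phi_k$-images is ``too large'' --- e.g. its closure is a countable union of affine-linear pieces of bounded total complexity, while the regulator of a well-chosen $2(k+1)$-point configuration escapes all of them --- or (b) an argument by contradiction producing, from surjectivity, a single tropical curve in $B\times\R$ realizing ``too much'' rational equivalence and violating balancing/exactness of $\omega$. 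Step~1's invariance statement and this final counting step are the two places the proof can go wrong; the local nontriviality in Step~2 should be routine linear algebra over $\Z$.
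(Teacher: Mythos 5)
There is a genuine gap, and it sits exactly where you flagged it: Steps 1 and 3. A ``regulator'' $\CH_0(B)_{\hom}\to A_\omega$ of the kind you describe necessarily has as target a quotient of a \emph{finite-dimensional} real vector space (your candidate $H^0(\wedge^{p-1}T^*_\Z B)^\vee/H_1(B;\Z)$ is a finite-dimensional torus), and surjectivity of $B^k\times B^k\to A_\omega$ for some fixed $k$ is then no contradiction at all---the Albanese map already realizes exactly this situation for $p=1$ without forcing infinite-dimensionality. A group homomorphism out of $\CH_0(B)_{\hom}$ can only ever constrain the \emph{image} of $\Psi:B^k\times B^k\to\CH_0(B)_{\hom}$; what the Mumford--Roitman mechanism actually bounds is the size of the \emph{fibers} of $\Psi$, and no homomorphism to a fixed finite-dimensional target can see that. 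Moreover, for $p\ge 2$ one cannot integrate $\omega$ over the $1$-dimensional tropical curve $W\subset B\times\R$, so the ``trace of $\omega$ integrates to zero over $W$'' step has no literal meaning; the correct invariance statement is infinitesimal, not integral.

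What the paper does instead: it considers the locus $\mcal Z_{n,k}\subset B^{2n}\times B^{2k}$ of pairs of configurations with the same class in $\CH_0(B)$, shows it is a countable union of open subsets of integral affine subspaces (\Cref{prop:Ziscountableunion}, via the tropical affine structure on moduli of parametrized tropical curves and the affineness of the evaluation maps at $\pm\infty$---this is essentially your option (a), and that part of your intuition is correct), and then proves each piece is \emph{isotropic} for the form $\omega=\sum_{i\le n+k}p_i^*\tilde\omega-\sum_{i> n+k}p_i^*\tilde\omega$ (\Cref{prop:Zisotropic}). The isotropy is the tropical substitute for your Step 1: $p$ deformations of a horizontal-at-infinity curve $\Gamma$, contracted into $\tilde\omega\wedge dt$ via \Cref{lem:deformationsand1forms}, produce a locally constant $1$-form on $\Gamma$, i.e.\ a class in $H_1(\bar\Gamma,\partial\Gamma;\R)$ by \Cref{lem:1formsandhomology}, and evaluating $\omega$ on the corresponding tangent vectors to the image of $(ev_{-\infty},ev_{+\infty})$ computes $\iota_*\circ\partial$ of that class, which vanishes. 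Finally, Roitman's linear-algebra lemma (\Cref{lem:roitmanbound})---an isotropic subspace for a sum of nonzero $p$-forms, $p\ge 2$, pulled back from $m$ factors has codimension at least $m$---gives $\dim Z_i\le \dim(B^{2n}\times B^{2k})-(2n+2k)$, so for $k$ large the countable union $\mcal Z_{n,k}$ cannot project onto $B^{2k}$. This codimension bound, which is precisely where the hypothesis $p\ge 2$ enters (the lemma fails for $p=1$, consistently with the Klein bottle example), is the missing idea in your proposal; without it neither your option (a) nor (b) closes the argument.
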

Note that, according to \Cref{mainthm:mainthm}, the existence of a non-zero tropical $1$-form does not imply the tropical Chow group is infinite-dimensional.
This can already be seen in the $1$-dimensional case: the tropical Abel-Jacobi theorem \cite[Theorem 6.2]{mikhalkin2008tropical} implies that a tropical circle has
$$
\CH_*(S^1) \cong H_*(S^1) \oplus \Jac(S^1),
$$
which is finite-dimensional. 
Our second main result, \Cref{mainpr:mainpr}, shows that $H^0(T_\Z^*B) \neq 0$ is also not sufficient to guarantee infinite-dimensionality of $\CH_0(B)_{\hom}$ even when $\dim B >1$.

Let us first recall the definition of the \emph{tropical Albanese variety}, which will replace the role of $\Jac(B)$ in the tropical Abel-Jacobi theorem.
It is defined as the quotient 
$$
\Alb(B) = \frac{\Hom(H^0(T_\Z^*B),\R)}{H_1(B;\Z)}
$$
where $1$-cycles include into $\Hom(H^0(T_\Z^*B),\R)$ via integration.
The tropical Chow group of points of degree zero admits a map
\begin{align*}
    \alb:\CH_0(B)_{\hom} \to \Alb(B)\\
    b^+-b_-\mapsto \int_\gamma -,
\end{align*}
where $\gamma$ is any curve in $B$ with $\partial\gamma = b^+ - b^-$ (note that any two such $\gamma$ differ by a $1$-cycle, so the map is well-defined).

To show that the existence of tropical $1$-forms is also not sufficient to guarantee infinite-dimensionality of $\CH_0(B)_{\hom}$ it is enough to consider the lowest dimensional case  $\dim B = 2$.
By dimension reasons $H^0(\wedge^p T_\Z^*B) = 0$ for $p > 2$, whereas $H^0(\wedge^2 T_\Z^*B) = 0$ if and only if $B$ is non-orientable.
The only $2$-dimensional closed integral affine manifolds (in particular, no singularities in the integral affine structure)  which are non-orientable are \emph{tropical Klein bottles}.
These have been classified by Sepe \cite{sepe2010classification}.
We work with the following family of tropical Klein bottles (this is the family numbered by (3) in \cite{sepe2010classification} and called `Klein bottle of type I' in \cite{muniz2024lagrangian}).
Let $\Gamma = \vspan{a,b |aba = b}$ be the fundamental group of a Klein bottle.
We consider the injection
\begin{align}\label{eq:injectionKlein}
    \begin{split}
        \Gamma &\into \Aff_\Z (\R^2) := \R^2 \rtimes \GL(2,\Z)\\
        a &\mapsto ((x,y)\mapsto (x, y + y_0))\\
    b &\mapsto ((x,y)\mapsto (x + x_0, -y))
    \end{split}
\end{align}
where $x_0,y_0 \in \R$.
The quotient
$$
K := \R^2/\Gamma
$$
inherits an integral affine structure.
We show:
\begin{mainprop}[=\Cref{pr:albKleiniso}]\label{mainpr:mainpr}
    The tropical Albanese map
    $$
    \alb:\CH_0(K)_{\hom}\to \Alb(K)
    $$
    is an isomorphism. 
    In particular, $\CH_0(K)_{\hom}$ is finite-dimensional.
\end{mainprop}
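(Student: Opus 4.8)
The plan is to exploit the fact that $K$ has essentially one tropical $1$-form, making $\Alb(K)$ one-dimensional, and then to show directly that the fibers of $\alb$ are trivial by producing enough tropical curves in $K \times \R$ to identify any two degree-zero cycles with the same Albanese image. First I would compute $H^0(T_\Z^*K)$: the $\Gamma$-action in \Cref{eq:injectionKlein} fixes $dx$ (both generators preserve the $x$-coordinate up to translation) but sends $dy \mapsto dy$ under $a$ and $dy \mapsto -dy$ under $b$, so the only invariant $1$-forms are multiples of $dx$; hence $H^0(T_\Z^*K) \cong \Z$ generated by $dx$. Correspondingly $\Hom(H^0(T_\Z^*K),\R) \cong \R$, and $H_1(K;\Z)$ maps into it by integrating $dx$ along loops. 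The loop $b$ contributes $x_0$ and the loop $a$ contributes $0$; since $H_1(K;\Z) \cong \Z \oplus \Z/2$ with the free part generated by (the class of) $b$, the image lattice is $x_0\Z \subset \R$, so $\Alb(K) \cong \R/x_0\Z$, a circle (assuming $x_0 \neq 0$; the degenerate case $x_0 = 0$ can be handled separately and in fact makes $\Alb$ trivial). I would record this as a preliminary computation.

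Next I would prove surjectivity of $\alb$, which is easy: for any $t \in \R/x_0\Z$, pick a point $b^+$ at "$x$-coordinate shift $t$" from a fixed basepoint $b^-$ along an $x$-directed segment — since the $x$-direction descends to a well-defined (possibly closed) $1$-dimensional tropical subvariety direction in $K$, a short such segment is a tropical curve with the right boundary, and its Albanese image is $t$. The real content is injectivity, equivalently: if $b^+ - b^-$ maps to $0$ in $\Alb(K)$, then $b^+ - b^- = 0$ in $\CH_0(K)_{\hom}$, and more generally any degree-zero cycle $\sum(p_i^+ - p_i^-)$ with vanishing Albanese image is rationally (tropically linearly) equivalent to zero. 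The strategy is a "moving lemma" in the universal cover: lift the cycle to $\R^2$, and use that in $\R^2 = B$ every two points are tropically linearly equivalent (one can connect any two points of $\R^2$ by an explicit "horizontal at infinity" tropical curve in $\R^2 \times \R$, e.g. the kind pictured in \Cref{fig:subfig2}), so upstairs everything collapses; the obstruction to pushing this down to $K$ is exactly the monodromy, which is measured by $H_1(K;\Z)$ acting through the affine representation, and the part of that monodromy that is \emph{visible} to tropical curves in $K \times \R$ is precisely the part detected by $\Alb(K)$.

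Concretely, the key steps for injectivity are: (i) reduce to showing a single difference $b^+ - b^-$ with trivial Albanese image is zero, by using that $\CH_0(K)_{\hom}$ is generated by such differences and that $\alb$ is a homomorphism; (ii) choose a path $\gamma$ in $K$ from $b^-$ to $b^+$ with $\int_\gamma dx \in x_0\Z$, and homotope/modify $\gamma$ (adding loops $a$, which cost nothing in $dx$, and integer multiples of $b$) so that a lift of $\gamma$ to $\R^2$ has endpoints differing by an element of $\Gamma$ that acts \emph{trivially on the $x$-coordinate up to an integer multiple of $x_0$} — then arrange the lift to be genuinely closed by absorbing the residual translation into the curve; (iii) build an explicit tropical curve $W \subset K \times \R$ that is horizontal at infinity and realizes $b^+ - b^-$, by taking a horizontal-at-infinity curve in $\R^2 \times \R$ connecting the two lifted points and checking it descends to $K \times \R$ (this is where the $\Gamma$-equivariance of the construction must be verified — the curve must be chosen in the $dx$-direction and in the $y$-direction symmetrically so that the $b$-action, which flips $y$, maps it to itself). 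The main obstacle I expect is step (iii): producing the descending tropical curve requires a careful, genuinely combinatorial construction respecting the orientation-reversing $\Gamma$-action on $\R^2$ — in particular ensuring balancing and integrality of weights survive the quotient near the fixed behavior of $b$ — and verifying that no extra relations are forced (so that $\alb$ is injective and not merely surjective with small kernel). A secondary subtlety is the torsion class in $H_1(K;\Z)$: one must check it does not obstruct the argument, which it should not, since $2$-torsion in $H_1$ corresponds to a loop bounding (twice) and hence already trivial in the target $\Alb(K)$, but it deserves an explicit remark.
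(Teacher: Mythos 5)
There is a genuine gap, and it sits at the heart of your injectivity argument. You claim that ``in $\R^2 = B$ every two points are tropically linearly equivalent,'' so that everything collapses in the universal cover and the only obstruction downstairs is monodromy. This is false. The very Albanese map you are trying to study is well defined on $\CH_0(\R^2)_{\hom}$ (this is the same well-definedness that makes $\alb$ on $K$ meaningful: for any horizontal-at-infinity curve in $B\x\R$ the projected $1$-chain integrates every locally constant $1$-form to an element of the period lattice, which for $B=\R^2$ is trivial), and it sends $p-q$ to the vector $p-q\in\R^2=\Alb(\R^2)$. Hence $p\sim q$ in $\R^2$ forces $p=q$. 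The curve of \Cref{fig:subfig2} that you invoke does not contradict this: it gives a relation $p_1^++p_2^+\sim p_1^-+p_2^-$ among four points whose vector sum is balanced, not a relation $p\sim q$ between two arbitrary points. Consequently step (iii) of your plan has nothing to push down: there is no horizontal-at-infinity tropical curve in $\R^2\x\R$ with two ends over distinct lifted points, so the ``moving lemma in the universal cover'' produces no relations at all. (Your secondary worry about $\Gamma$-equivariance is also misplaced in the other direction: the image of any tropical curve under the covering projection $\R^2\x\R\to K\x\R$ is automatically a tropical curve, so descent is never the issue; existence upstairs is.)

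The relations that actually make $\alb$ injective on $K$ come from curves that do \emph{not} lift to curves with finitely many edges in the universal cover --- they wrap around the circle directions of $K$ --- so any correct proof must use the global topology of $K$ rather than $\R^2$. The paper does this by exhibiting $K$ with two circle fibrations $p_1,p_2$ (fibers in the $y$- and $x$-directions respectively), taking the section $s(\theta)=(\theta,0)$ of $p_1$, and showing $s_*:\CH_0(S^1)\to\CH_0(K)$ is an isomorphism: injectivity is free because $s$ is a section, and surjectivity is proved by applying the tropical Abel--Jacobi theorem \emph{on the fiber circles}, where wrapping relations do exist. The key computations are the torsion relation $2(p-\iota(p))\sim 0$ (because $p$ and $\iota(p)$ differ by a $2$-torsion element of the Jacobian of the fiber of $p_2$, a circle of circumference $2x_0$) and $p+\iota(p)\sim 2s(\theta)$ on the fiber of $p_1$, which combine with divisibility of the circle to give $p\sim s(\theta_p)$. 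Neither the involution $\iota$ nor the torsion phenomenon is visible from your universal-cover picture, and your passing remark that the $\Z/2$ in $H_1(K;\Z)$ ``should not obstruct the argument'' is exactly backwards: that torsion is what the proof runs on. To salvage your approach you would need to construct, by hand, $\Gamma$-periodic tropical curves in $\R^2\x\R$ whose extra ends match up only after quotienting --- at which point you would essentially be rediscovering the fiberwise Abel--Jacobi argument.
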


\begin{rmk}
    Existence of non-zero tropical $1$-forms is instead related to \emph{non-discreteness} of the tropical Chow group: the split short exact sequence
    $$
    0 \to \CH_0(B)_{\hom} \to \CH_0(B) \to \Z \to 0
    $$
    together with the surjective Albanese map $\CH_0(B)_{\hom}  \to \Alb(B)$ show that if $\Alb(B)$ is non-trivial (which is equivalent to $H^0(T^*_\Z B) \neq 0$, since the $\R$-vector space $\Hom(H^0(T^*_\Z B), \R)$ is zero if and only if the free $\Z$-module $H^0(T^*_\Z B)$ is zero), then $\CH_0(B)_{\hom}$ cannot be discrete.
    In other words, not all cycles of the same degree are equivalent to one another.
\end{rmk}

 \subsection{Connections to other work}
As mentioned in \Cref{sec:Chowgroupsintro}, \Cref{mainthm:mainthm} can be thought of as a tropical version of Mumford's theorem \cite{mumford1969rational} when $\dim B = 2$ and Roitman's generalization \cite{roitman1971gamma} for arbitrary $\dim B \geq 2$.
In fact, Mumford and Roitman's results can be obtained as a corollary of \Cref{mainthm:mainthm} when $Y$ admits a tropicalization $B$, as long as one assumes that analytic curves tropicalise (see \Cref{rmk:tropicalisation}).

To make this more precise, let us recall from \cite{kontsevich2006affine} that associated to any integral affine manifold $B$ there is a rigid-analytic space $Y = Y(B)$. 
Rigid-analytic spaces are the analog of (complex) analytic spaces for non-archimedean fields.
In our case, $Y$ is an analytic space  over the Novikov field: this is the non-archimedean local field 
$$
\Lambda_{\Bbbk} = \left\lbrace \sum_{i=0}^\infty a_i T^{\lambda_i} \mid a_i \in \Bbbk, \lambda_i \in \R, \lim_{i\to \infty} \lambda_i = \infty \right\rbrace.
$$
The space $Y$ comes with a projection $\pi:Y \to B$.
We consider the  natural surjective map 
\begin{align}\label{eq:Z0YtoCHB}
    \begin{split}
        Z_0(Y) &\to \CH_0(B)\\
        [p] &\mapsto [\pi(p)].
    \end{split}
\end{align}
If $C \subset Y \x \P^1$ is a curve exhibiting some rational equivalence between points $(y_i^-)$ and $(y_i^+)$, under the assumption that analytic curves tropicalise the tropicalization $\pi(C)$ of $C$ is a tropical curve in $B \x \R$ exhibiting a tropical linear equivalence between the points $(\pi(y_i^-))$ and $(\pi(y_i^+))$ (in the sense of \Cref{def:tropical_linear_equivalence}).
It follows that the map in \Cref{eq:Z0YtoCHB} descends to a map
$$
    \CH_0(Y) \to \CH_0(B).
$$
Since this map is surjective, the commutative diagram
$$
\begin{tikzcd}
Y^k \x Y^k \ar[r] \ar[d, twoheadrightarrow] & \CH_0(Y)_{\hom} \ar[d, twoheadrightarrow] \\
B^k \x B^k \ar[r] & \CH_0(B)_{\hom}
\end{tikzcd}
$$
implies that if $\CH_0(B)_{\hom}$ is infinite-dimensional then so is $\CH_0(Y)_{\hom}$.
\begin{remark}
    Note that both Mumford and Roitman work with \emph{projective} varieties.
    If $Y = Z^{an}$ is the analytification of a projective variety $Z$, then by the rigid-analytic GAGA principle one has that $\CH_*(Y) \cong \CH_*(Z)$ (the former group being built from analytic subvarieties, the latter from algebraic subvarieties). 
    In this case we recover Mumford and Roitman's results.
    On the other hand, our result seems to be more general.
    For instance, when $Y$ is an (analytic) torus its tropicalisation is an integral affine torus. 
    In this case it is known that projectivity of $Y$ translates into $B$ being \emph{polarised} (this means it admits a strictly convex function with certain properties, cf. \cite{mikhalkin2008tropical}). 
    Our result works for general $B$ (not necessarily polarised) and would thus relate to the dimensionality of Chow groups of (non-algebraic) rigid-analytic varieties.
\end{remark}

\begin{remark}\label{rmk:tropicalisation}
    The fact that $\pi(C)$ is a tropical curve whenever $C \subset Y$ is an irreducible analytic curve seems to be well-known to experts but has only appeared in the literature under certain assumptions, see \cite{yu2015balancing,baker2016nonarchimedean}.
\end{remark}

\begin{remark}
    The existence of non-realizable tropical curves (that is, tropical curves in $B \x \R$ that do not admit a lift to $Y \x \P^1$) could a priori suggest that $\CH_0(B)$ should be `smaller' than $\CH_0(Y)$ (as every relation in $\CH_0(Y)$ gives one in $\CH_0(B)$, but not the other way around).
    However, \Cref{mainthm:mainthm} shows that, even if smaller, these non-realizable tropical curves are not enough to make $\CH_0(B)$ finite-dimensional.
\end{remark}

The results of Mumford were later translated into the world of symplectic geometry by Sheridan-Smith \cite{sheridan2020rational} and the corresponding generalization of Roitman by the present author in \cite[Proposition 1.6]{muniz2024lagrangian}, showing analogous statements for the so-called \emph{Lagrangian cobordism groups}.
All these works exploit a common idea (which goes back to the work of Severi): finding a dimension bound on a certain space by showing that it is isotropic for some differential form.
We emphasize that, while our proof is based on the same method, it is not a direct translation of either Mumford or Roitman's  nor Sheridan-Smith's proof. 
The former involve the study of differential forms on singular spaces and does not translate straightforwardly to tropical geometry, while the latter exploits a flexibility in Lagrangian cobordisms that is also not present in the tropical setting.
Instead, we use the rigidity of tropical geometry and in particular tropical curves to give a purely tropical proof of our result.

One can also think of \Cref{mainpr:mainpr} as a (partial) tropical version of  \cite{bloch1976zero}.
The authors show that if $Y$ is a smooth projective complex surface with Kodaira dimension $\kappa(Y) < 2$, then the vanishing of $H^0(\mcal K_Y)$ implies $\CH_0(Y)_{\hom} \cong \Alb(Y)$ (in particular, $\CH_0(Y)_{\hom}$ is finite-dimensional).
Bielliptic surfaces are examples of such surfaces, and they tropicalize to tropical Klein bottles; moreover, those of type I tropicalize to tropical Klein bottles of the type considered in this paper.
Hence \Cref{mainpr:mainpr} is the tropical version of the result in \cite{bloch1976zero} when $Y$ is an algebraic bielliptic surface of type I.

We lastly mention a connection between \Cref{mainpr:mainpr} and Lagrangian cobordism groups in symplectic geometry.
If $B$ is an integral affine manifold then one can consider the symplectic manifold $X(B):= T^*B / T^*_\Z B$ together with the Lagrangian torus fibration $X(B) \to B$.
Points in $B$ correspond to Lagrangian torus fibers in $X(B)$, and the work of Sheridan-Smith \cite{sheridan2021lagrangian} exhibits a relationship between (cylindrical) Lagrangian cobordisms between fibers of $X(B) \to B$ and tropical linear equivalence between points in $B$: showing that trivalent tropical curves admit Lagrangian lifts, they construct a map 
$$
\CH_0^{tri}(B) \to \Cob_{fib}(X(B))
$$
for any integral affine manifold $B$. 
(Here the superscript `tri' means a tropical Chow group whose relations come only from trivalent tropical curves in $B \x \R$, and the subscript 'fib' means a Lagrangian cobordism group generated only by fibers of the fibration $X(B) \to B$)
For $B = K$ a tropical Klein bottle of type I and $\mcal K:= X(K)$ the associated symplectic manifold, the subgroup 
$$
\Cob^{trop}(\mcal K) \subset \Cob(\mcal K)
$$
of the Lagrangian cobordism group of $\mcal K$ generated by tropical Lagrangians---that is, lifts of tropical subvarieties---was computed by the author in \cite{muniz2024lagrangian}.\footnote{\cite{muniz2024lagrangian} computes the \emph{planar} cobordism group of $\mcal K$, which is generally bigger than the cylindrical cobordism group. However, the other main theorem in that paper shows that the planar cobordism group is isomorphic to the Grothendieck group of the Fukaya category, and this isomorphism implies that the planar and cylindrical cobordism groups are actually isomorphic.}
The subgroup $\Cob_{fib}(\mcal K) \subset \Cob^{trop}(\mcal K)$ generated by fibers was found to be isomorphic to $\Z \oplus S^1$.\footnote{In \cite{muniz2024lagrangian} the Lagrangians and cobordisms are equipped with additional data, namely Pin and brane structures as well as a $G$-local system for a fixed abelian group $G$. The results in that paper compute the cobordism group including such data, hence the statement therein is $\Cob_{fib}(\mcal K) \cong \Z \oplus (S^1 \oplus G)$ (see \cite[Proposition 4.25]{muniz2024lagrangian}). If one forgets local systems (or equivalently, if one sets $G = 0$) then we obtain the above statement $\Cob_{fib}(\mcal K) \cong \Z \oplus S^1$.}
We thus see that the map 
\begin{align*}
    \CH_0(K)\cong\CH_0^{tri}(K) &\to \Cob_{fib}(\mcal K)\\
    [p] &\mapsto [F_p]
\end{align*}
is an isomorphism.
(The isomorphism $\CH_0(K)\cong\CH_0^{tri}(K)$ follows from noting that all the tropical curves in the proof of \Cref{mainpr:mainpr} are trivalent.)
In fact, in the present case we have
\begin{lem}\label{lem:CHtoCob}
    There is a well-defined map
    \begin{align*}
        \CH_*(K) &\to \Cob^{trop}(\mcal K)\\
        V & \mapsto L_V
    \end{align*}
    mapping a tropical subvariety $V \subset K$ to the corresponding Lagrangian lift $L_V \subset \mcal K$.
\end{lem}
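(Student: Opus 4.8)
The plan is to define the assignment on generators of $Z_*(K)$ and then check that it is compatible with tropical linear equivalence, so that it descends to $\CH_*(K)$. Since $\dim K = 2$, a tropical subvariety $V\subset K$ is a point, a tropical curve, or all of $K$. To a point $p$ we associate the Lagrangian torus fibre $L_p := F_p$ of $\mcal K\to K$; to $K$ itself the zero section $L_K := K\subset\mcal K = T^*K/T^*_\Z K$; and to a tropical curve $C$ its tropical Lagrangian lift $L_C$, built as in \cite{sheridan2021lagrangian,muniz2024lagrangian} and the references therein from a Lagrangian $S^1$-bundle over the smooth locus of $C$ by inserting the standard Lagrangian pair of pants over each vertex, after subdividing $C$ so that all of its vertices are trivalent. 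Because $K = \R^2/\Gamma$, such a curve pulls back to a $\Gamma$-periodic tropical curve in $\R^2$, and the construction can be performed $\Gamma$-equivariantly and then pushed down to $\mcal K$; note that all three types of lift are $2$-dimensional Lagrangians, so the grading of $\CH_*$ is collapsed in the target. Extending $\Z$-linearly over formal differences gives a homomorphism out of $Z_*(K)$; its value on $V$ is independent of the auxiliary choices (trivalent subdivision, gluing profiles, neck widths), since different choices yield Lagrangians related by a Hamiltonian isotopy supported near the relevant fibres, and the corresponding Lagrangian suspension is a tropical Lagrangian cobordism between them.

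For well-definedness on $\CH_*(K)$ I would argue as follows. Suppose $V_+ - V_-\in R_k(K)$, witnessed by a $(k+1)$-dimensional tropical subvariety $W\subset K\x\R$ that coincides with $V_\pm\x\R$ over neighbourhoods of $\pm\infty$. The symplectic manifold attached to $K\x\R$ is $X(K\x\R) = X(K)\x X(\R) = \mcal K\x(\R\x S^1)$, which is precisely the ambient space for the (cylindrical) Lagrangian cobordisms that define $\Cob^{trop}(\mcal K)$. Applying the tropical Lagrangian lift construction to $W$ yields a Lagrangian $L_W$ in this space, and because the construction is local in $K\x\R$ and compatible with products with a line, the hypothesis that $W$ is horizontal at infinity forces $L_W$, near $\pm\infty$, to be the cylindrical end modelled on $L_{V_\pm}$. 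Hence $L_W$ is a Lagrangian cobordism from $L_{V_-}$ to $L_{V_+}$ and, being the lift of a tropical subvariety, a tropical Lagrangian; therefore $[L_{V_+}] = [L_{V_-}]$ in $\Cob^{trop}(\mcal K)$, and the map descends to $\CH_*(K)$ as claimed. (One should also note that the Lagrangians in play, although possibly non-orientable, are surfaces and hence carry the Pin and brane data used to define $\Cob^{trop}(\mcal K)$.)

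The step I expect to be the main obstacle is the input black-boxed above: producing an embedded tropical Lagrangian lift with the expected cylindrical ends for an \emph{arbitrary} tropical subvariety $V\subset K$, and for the two-dimensional witnesses $W\subset K\x\R$, rather than only for trivalent tropical curves in the fibre direction. For cobordisms between points this is exactly Sheridan-Smith's theorem \cite{sheridan2021lagrangian}; for tropical curves in the surface $K$, and for the $W$'s arising here, one either appeals to the more general tropical-to-Lagrangian lifting results or observes that after subdivision every vertex in sight is trivalent---as is the case for all the tropical subvarieties occurring in the proof of \Cref{mainpr:mainpr}---so that only the standard local models appear and the symplectic gluing is unobstructed. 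Granting the lift, matching $L_W$ with $L_{V_\pm}$ at the ends is a local computation in integral-affine charts, and the remainder of the argument is formal.
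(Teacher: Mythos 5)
Your overall architecture matches the paper's: define the map on generators (points to fibres, $K$ to the zero section, curves to tropical Lagrangian lifts), then check compatibility with tropical linear equivalence, using Sheridan--Smith for the cobordisms between point-fibres. But there is a genuine gap, and it is exactly the step you flag as ``the main obstacle'': you never actually produce the Lagrangian lifts of arbitrary tropical curves $C \subset K$ or of the $2$-dimensional witnesses $W \subset K \x \R$. The fallbacks you offer do not close it. The pair-of-pants construction is a construction for \emph{curves}, so it says nothing about lifting the surfaces $W$; and ``subdividing $C$ so that all vertices are trivalent'' is not in general possible while preserving the image of $C$ (resolving a higher-valent balanced vertex into trivalent ones introduces bounded edges of positive length and moves the curve), so even the generator case is not covered. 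Restricting attention to the tropical subvarieties appearing in the proof of \Cref{pr:albKleiniso} is also not legitimate here: the lemma asserts a well-defined map on all of $\CH_*(K)$, so every generator and every relation must be handled.

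The missing idea is a dimension count. Since $\dim K = 2$, every tropical curve in $K$ is a tropical \emph{hypersurface}, and every tropical surface in the $3$-dimensional $K \x \R$ witnessing a relation between curves is likewise a tropical hypersurface; tropical hypersurfaces admit Lagrangian lifts by the results of Hicks \cite{hicks2020tropical}, with no trivalence or genericity hypothesis needed. That single observation is essentially the entire content of the paper's proof: Hicks handles the curve generators and the surface relations, and Sheridan--Smith \cite{sheridan2021lagrangian} handles the curves in $K \x \R$ between points (which are \emph{not} hypersurfaces, so Hicks does not apply there --- a case distinction you do implicitly respect by citing Sheridan--Smith for the $0$-cycles).
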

\begin{proof}
    This map is well-defined on generators since (i) points (resp. the whole base) clearly lift to fibers (resp. the zero-section)  and (ii) tropical curves are in this case tropical hypersurfaces, whose lift exists by results of Hicks \cite{hicks2020tropical}.
    Furthermore, it also respects relations coming from tropical linear equivalence: for curves between points this follows from the results of \cite{sheridan2021lagrangian}, whereas tropical surfaces in $K \x \R$ between tropical curve in $K$ are tropical hypersurfaces and thus lift to Lagrangian surfaces in $\mcal K \x \C^*$ (again by \cite{hicks2020tropical}).
\end{proof}

The computations of the tropical homology of a Klein bottle in \cite{jell2018lefschetz} together with \Cref{mainpr:mainpr} show:
\begin{corollary}
    Let $K$ be a tropical Klein bottle of type I. 
    Then
    $$
    \CH_*(K) \cong (\Z \oplus \Alb(K))\oplus(\Z^2 \oplus \Z_2 \oplus \Pic^0(K))\oplus \Z
    $$
with $\Alb(K) \cong S^1 \cong \Pic^0(K)$.
\end{corollary}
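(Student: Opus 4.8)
The plan is to compute the three graded pieces $\CH_0(K)$, $\CH_1(K)$ and $\CH_2(K)$ separately and then take their direct sum. The outer two are immediate. Since $K$ is connected of dimension two, its only pure two-dimensional tropical subvariety is $K$ itself (with a positive integral weight), so $Z_2(K)\cong\Z$; and the only three-dimensional tropical subvariety of $K\x\R$ is $K\x\R$, which produces only the trivial relation, so $\CH_2(K)\cong\Z$. For the zero-dimensional piece I would use the split short exact sequence $0\to\CH_0(K)_{\hom}\to\CH_0(K)\to\Z\to0$ given by the degree, together with \Cref{mainpr:mainpr}, which identifies $\CH_0(K)_{\hom}$ with $\Alb(K)$; hence $\CH_0(K)\cong\Z\oplus\Alb(K)$.

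Next I would read off $\Alb(K)\cong S^1$ directly from its definition. The local system $T^*_\Z K$ has monodromy the identity around the loop $a$ and $\mathrm{diag}(1,-1)$ around the loop $b$ (the linear parts appearing in \Cref{eq:injectionKlein}), so its only monodromy-invariant integral $1$-forms are the multiples of $dx$; thus $H^0(T^*_\Z K)\cong\Z$ and $\Hom(H^0(T^*_\Z K),\R)\cong\R$. The lattice $H_1(K;\Z)$, whose free part is generated by $b$, includes into $\R$ via $\gamma\mapsto\int_\gamma dx$; one checks $b\mapsto x_0$ and the torsion class $a\mapsto0$, so (using $x_0\neq0$) the image is $x_0\Z$ and $\Alb(K)\cong\R/x_0\Z\cong S^1$.

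The main content is $\CH_1(K)$. Tropical curves in the surface $K$ are tropical divisors, and a tropical linear equivalence between two such curves (realised by a tropical surface in $K\x\R$ that is horizontal at infinity) corresponds to linear equivalence of the associated tropical Cartier divisors; I would use this to identify $\CH_1(K)$ with the tropical Picard group $\Pic(K)$, and compute the latter as $H^1(K;\mathcal A)$, where $\mathcal A$ is the sheaf of integral-affine functions fitting in $0\to\underline{\R}\to\mathcal A\xrightarrow{d}T^*_\Z K\to0$ with $d$ the differential. The associated long exact sequence contains $H^0(T^*_\Z K)\xrightarrow{\delta}H^1(K;\R)\to\Pic(K)\to H^1(K;T^*_\Z K)\to H^2(K;\R)$. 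Here $H^2(K;\R)=0$ because $K$ is non-orientable; the connecting map $\delta$ sends an integral $1$-form to its de Rham class, so $\delta(dx)=[dx]\neq0$ (it has period $x_0$) and $\mathrm{coker}(\delta)\cong\R/x_0\Z\cong S^1$, which is by definition $\Pic^0(K)$; and $H^1(K;T^*_\Z K)=H^{1,1}_{\mathrm{trop}}(K;\Z)\cong\Z^2\oplus\Z_2$ by the tropical (co)homology computation for the Klein bottle in \cite{jell2018lefschetz} (alternatively, from the group-cohomology complex of $\Gamma$ acting on $\Z^2$ through $b\mapsto\mathrm{diag}(1,-1)$). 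This yields $0\to\Pic^0(K)\to\CH_1(K)\to\Z^2\oplus\Z_2\to0$, which splits since $S^1\cong\R/\Z$ is divisible, hence injective as an abelian group, so $\mathrm{Ext}^1(\Z_2,S^1)=0$. Therefore $\CH_1(K)\cong\Z^2\oplus\Z_2\oplus\Pic^0(K)$ with $\Pic^0(K)\cong S^1$ — the same $S^1$ as $\Alb(K)$, both being $\R/x_0\Z$.

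Summing the three pieces gives $\CH_*(K)\cong(\Z\oplus\Alb(K))\oplus(\Z^2\oplus\Z_2\oplus\Pic^0(K))\oplus\Z$ with $\Alb(K)\cong S^1\cong\Pic^0(K)$, as claimed. The step I expect to be the main obstacle is the identification of $\CH_1(K)$ with the tropical Picard group — equivalently, that the tropical cycle class map $\CH_1(K)\to H^{1,1}_{\mathrm{trop}}(K;\Z)$ is surjective with kernel $\Pic^0(K)$. This is a tropical Lefschetz $(1,1)$-type statement; it should be tractable here because $K$ is such an explicit two-dimensional tropical affine manifold, but it requires carefully comparing tropical linear equivalence (defined via tropical curves in $K\x\R$) with linear equivalence of tropical Cartier divisors, and matching $\CH_1(K)_{\hom}$ with $\mathrm{coker}(\delta)$.
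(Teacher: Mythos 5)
Your computation is correct and lands on the same answer, but your treatment of the degree-one piece differs from the paper's. The paper runs everything through the cycle-class map $\CH_i(K)\to H_{i,i}(K)$ to tropical \emph{homology}: surjectivity is immediate for $i=0,2$, follows for $i=1$ from the vanishing $H_{2,0}(K;\R)=0$ via the tropical Lefschetz $(1,1)$ theorem of \cite{jell2018lefschetz}, and then one splits the resulting short exact sequence $0\to\CH_*(K)_{\hom}\to\CH_*(K)\to H_{*,*}(K)\to0$ using $\CH_2(K)_{\hom}=0$, $\CH_1(K)_{\hom}=\Pic^0(K)$, $\CH_0(K)_{\hom}\cong\Alb(K)$, and $H_{1,1}(K)\cong\Z^2\oplus\Z_2$. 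You instead compute $\CH_1(K)$ as $\Pic(K)=H^1(K;\mathcal A)$ via the long exact sequence of $0\to\underline\R\to\mathcal A\to T^*_\Z K\to0$, with the vanishing $H^2(K;\R)=0$ playing the role of $H_{2,0}(K;\R)=0$ and $\operatorname{coker}(\delta)\cong\R/x_0\Z$ recovering $\Pic^0(K)$. These are really two presentations of the same Lefschetz-$(1,1)$ input, and both hinge on the identification of tropical linear equivalence of curves in $K$ (via surfaces in $K\x\R$) with linear equivalence of tropical Cartier divisors --- the step you rightly flag as the main obstacle. The paper is no more detailed here: it simply asserts $\CH_1(K)_{\hom}=\Pic^0(K)$, so your version makes the dependence more explicit, at the cost of having to justify $\CH_1(K)\cong\Pic(K)$ and to reconcile the cohomological group $H^1(K;T^*_\Z K)$ with the homological $H_{1,1}(K)$ used in the statement (they agree here by tropical Poincar\'e duality for the closed surface $K$). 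Your direct computations of $\CH_2(K)\cong\Z$, of $\CH_0(K)\cong\Z\oplus\Alb(K)$ from \Cref{mainpr:mainpr}, of $\Alb(K)\cong\R/x_0\Z$ from the monodromy, and the splitting via divisibility of $S^1$ are all fine and consistent with what the paper does.
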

\begin{proof}
    We consider the cycle-class map
    $$
        \CH_i(K) \to H_{i,i}(K)
    $$
    where $H_{i,j}(-)\equiv H_{i,j}(-;\Z)$ denote the tropical homology groups \cite{itenberg2019tropical}.
    These maps are clearly surjective for $i=0,2$.
    Furthermore, the tropical homology groups of $K$ have been computed for Klein bottles in \cite{jell2018lefschetz}, and it was shown there that $H_{2,0}(K;\R)=0$ and that this implies surjectivity for $i=1$.
    Thus the cycle-class map is surjective on every degree and we have a short exact sequence
    $$
        0 \to \CH_*(K)_{\hom} \to \CH_*(K) \to H_{*,*}(K) \to 0.
    $$
    We have that $\CH_2(K)_{\hom} = 0$ and $\CH_1(K)_{\hom} = \Pic^0(K)$.
    By \Cref{mainpr:mainpr} we also have $\CH_0(K)_{\hom} \cong \Alb(K)$.
    The result now follows by splitting the above short exact sequence  and using the computation $H_{1,1}(K) \cong \Z^2 \oplus \Z_2$ from \cite{jell2018lefschetz}.
\end{proof}

Together with the main theorem of \cite{muniz2024lagrangian} this shows:
\begin{prop}
    The map $\CH_*(K) \to \Cob^{trop}(\mcal K)$ of \Cref{lem:CHtoCob} is an isomorphism.
\end{prop}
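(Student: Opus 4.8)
The plan is to pit the explicit description of $\CH_*(K)$ furnished by the Corollary against the independent computation of $\Cob^{trop}(\mcal K)$ in \cite{muniz2024lagrangian}. Splitting the short exact sequence in the Corollary gives $\CH_*(K) \cong \CH_*(K)_{\hom} \oplus H_{*,*}(K)$, where $\CH_*(K)_{\hom} = \Alb(K) \oplus \Pic^0(K) \cong (S^1)^2$ (using $\CH_2(K)_{\hom} = 0$) and $H_{*,*}(K) \cong \Z^4 \oplus \Z_2$; since $\Z^4 \oplus \Z_2$ has no nonzero divisible subgroup, $\CH_*(K)_{\hom}$ is precisely the maximal divisible subgroup of $\CH_*(K)$. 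Comparing with \cite{muniz2024lagrangian}, the group $\Cob^{trop}(\mcal K)$ is abstractly isomorphic to $\CH_*(K)$, with maximal divisible subgroup its homologically trivial part $\Cob^{trop}(\mcal K)_{\hom} \cong (S^1)^2$ and finitely generated quotient $\cong \Z^4 \oplus \Z_2$. Write $f$ for the map of \Cref{lem:CHtoCob}. Surjectivity of $f$ is immediate: by construction $\Cob^{trop}(\mcal K)$ is generated by the classes of tropical Lagrangians, and every tropical Lagrangian is $L_V$ for some tropical subvariety $V \subseteq K$, so $\im f$ contains a generating set. It remains to prove injectivity.

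For injectivity, first peel off the finitely generated part. A homomorphism sends divisible subgroups to divisible subgroups, so $f(\CH_*(K)_{\hom}) \subseteq \Cob^{trop}(\mcal K)_{\hom}$ and $f$ descends to a map $\Z^4 \oplus \Z_2 \to \Z^4 \oplus \Z_2$ on the quotients. This induced map is surjective (as $f$ is), hence bijective, because any surjective homomorphism between isomorphic finitely generated abelian groups is an isomorphism. Consequently $\ker f \subseteq \CH_*(K)_{\hom}$, and moreover $f$ restricts to a \emph{surjection} $\CH_*(K)_{\hom} \twoheadrightarrow \Cob^{trop}(\mcal K)_{\hom}$. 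One is thus reduced to proving that the surjection
$$
f\colon \Alb(K) \oplus \Pic^0(K) = \CH_*(K)_{\hom} \longrightarrow \Cob^{trop}(\mcal K)_{\hom} \cong (S^1)^2
$$
has trivial kernel. On the $\Alb(K) = \CH_0(K)_{\hom}$ summand this map is, by construction, the restriction of the isomorphism $\CH_0(K) \xrightarrow{\ \sim\ } \Cob_{fib}(\mcal K)$ established in the discussion preceding \Cref{lem:CHtoCob} (which uses \Cref{mainpr:mainpr}), followed by the inclusion $\Cob_{fib}(\mcal K) \subseteq \Cob^{trop}(\mcal K)$; in particular it is injective, with image a direct summand isomorphic to $S^1$. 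On the $\Pic^0(K) = \CH_1(K)_{\hom}$ summand I would argue in the same way: a difference of homologically trivial tropical curves in $K$ lifts — via the Lagrangian lifts of tropical hypersurfaces of \cite{hicks2020tropical}, with the corresponding tropical linear equivalences realized by the cobordisms in $\mcal K \times \C^*$ of \Cref{lem:CHtoCob} — to a null-homologous Lagrangian surface whose cobordism class is governed by the $\Pic^0(K)$-class of the curve, by \cite{sheridan2021lagrangian} together with the identification of \cite{muniz2024lagrangian}. Matching this with the remaining circle factor, separated from the first by the subgroup $\Cob_{fib}(\mcal K)$, yields injectivity on all of $\CH_*(K)_{\hom}$, so $\ker f = 0$.

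The step I expect to be the main obstacle is this last one. Abstractly, a surjective endomorphism of $(S^1)^2$ can have nontrivial finite kernel, so surjectivity of $f$ on the divisible part does not even come close to injectivity there; one genuinely needs to identify the map on $\Alb(K) \oplus \Pic^0(K)$ with an isomorphism. This is a compatibility statement between the computation of \cite{muniz2024lagrangian} and the tropical cycle-class and Albanese constructions used here: the two continuous $S^1$-factors of $\Cob^{trop}(\mcal K)$ must correspond, respectively, to cobordism classes of fibers and of lifts of tropical curves, and these in turn to $\Alb(K)$ and $\Pic^0(K)$ via the Albanese-type invariants. For the fiber circle this compatibility is already contained in the paragraph preceding \Cref{lem:CHtoCob}; for the curve circle it has to be extracted from the Lagrangian-lift construction of \cite{sheridan2021lagrangian} and the cobordism-group computation of \cite{muniz2024lagrangian}. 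Once it is recorded, the reduction above forces $f$ to be an isomorphism.
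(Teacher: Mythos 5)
Your overall strategy is the same as the paper's (the paper offers no detailed argument beyond juxtaposing the Corollary with the computation of $\Cob^{trop}(\mcal K)$ in \cite{muniz2024lagrangian}), and your group-theoretic reduction is correct and in fact more careful than the paper's one-line assertion: you rightly observe that $\CH_*(K)$ is not Hopfian, so surjectivity plus an abstract isomorphism of the two groups proves nothing by itself; your passage to the maximal divisible subgroup $(S^1)^2$ and the Hopfian quotient $\Z^4\oplus\Z_2$ correctly reduces everything to injectivity of $f$ on $\Alb(K)\oplus\Pic^0(K)$, and the $\Alb(K)$ summand is indeed handled by the isomorphism $\CH_0(K)\cong\CH_0^{\tri}(K)\xrightarrow{\sim}\Cob_{fib}(\mcal K)$ already established in the text via \Cref{pr:albKleiniso}.

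The genuine gap is exactly where you flag it, and flagging it is not the same as closing it: you never prove that $f$ restricted to $\Pic^0(K)=\CH_1(K)_{\hom}$ is injective, nor that its image meets $\Cob_{fib}(\mcal K)$ trivially. This cannot be waved away by surjectivity, since a surjective endomorphism of $S^1$ (or of $(S^1)^2$ fixing one factor) can have kernel $\Z_n$ for any $n$; so one genuinely must identify the induced map $\Pic^0(K)\to\Cob^{trop}(\mcal K)_{\hom}/\Cob_{fib}(\mcal K)_{\hom}$ with an isomorphism of circles. That requires matching the invariant that \cite{muniz2024lagrangian} uses to distinguish cobordism classes of tropical Lagrangian lifts of homologically equivalent curves (a flux/holonomy-type invariant on the second $S^1$ factor) with the tropical $\Pic^0$-class of the curve, i.e.\ a statement parallel to the one proved for fibers in the paragraph preceding \Cref{lem:CHtoCob} but for divisors rather than points. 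Your sentence ``it has to be extracted from the Lagrangian-lift construction \dots\ Once it is recorded, the reduction above forces $f$ to be an isomorphism'' is a correct diagnosis of what is missing, but as written the proof is incomplete at precisely that step; to finish it you should either cite the specific statement of \cite{muniz2024lagrangian} computing $\Cob^{trop}(\mcal K)/\Cob_{fib}(\mcal K)$ together with the invariant realizing the identification, or prove directly that two tropical curves in $K$ with the same class in $\Z^2\oplus\Z_2\oplus\Pic^0(K)$ have Lagrangian lifts that are cobordant, and conversely.
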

\begin{remark}
    As pointed out in footnote 3, for this statement to hold the Lagrangians in the group $\Cob^{trop}(\mcal K)$ are not carrying a local system.
\end{remark}

 \paragraph{Acknowledgments} 
I would like to thank my advisor Nick Sheridan for his constant support and guidance.
I am grateful to Jeff Hicks for reading and commenting on an earlier draft of this paper, as well as pointing out that one can recover Mumford and Roitman's result when there is a tropicalization.
I also thank Arend Bayer for suggesting to try to find a tropical analog of Mumford's theorem, which led to the results in this paper.
This work was supported by an ERC Starting Grant (award number 850713-HMS). 
\section{Preliminaries}
Recall an \emph{integral affine manifold} is a smooth manifold $B$ together with an integrable lattice $T_\Z B \subset TB$ of tangent vectors.
This data is equivalent to an atlas of charts whose transition functions belong to the group $\Aff_\Z(\R^n)$ of integral affine transformations of $\R^n$, i.e. those of the form 
$$
\mathbf{x} \mapsto A\mathbf{x} + \mathbf{x}_0, A \in \GL(n,\Z), \mathbf{x}_0 \in \R^n.
$$
In this section, we introduce $1$-dimensional tropical subvarieties of $B$ (so-called \emph{tropical curves}), their moduli-spaces and deformations.
\subsection{Background on tropical curves}\label{sec:backgroundtropicalcurves}
We recall the notions of abstract tropical curves, tropical curves in a tropical manifold and parametrised tropical curves, following \cite{mikhalkin2005enumerative}.

Given a tropical manifold $B$, a \emph{tropical curve} in $B$ is a weighted,\footnote{In our definition, weights are non-negative.} balanced rational polyhedral complex of pure dimension one.
We now define an intrinsic notion of a tropical curve (that is, with no reference to an ambient manifold $B$) and then identify tropical curves in $B$ with morphisms from these `abstract' tropical curves to $B$.

Let $\Gamma$ be a graph, possibly with semi-infinite edges\footnote{Note that infinite edges, such as a graph with one edge and no vertices, are not allowed} and no $1$-valent vertices.
A \emph{tropical structure} on $\Gamma$ is the data of a tropical affine structure on each of its edges such that the induced Riemannian metric $dx^2$---where $dx \in T^*_\Z e$ is a generator---is complete.
A graph $\Gamma$ together with a tropical structure will be called an \emph{abstract tropical curve}.
\begin{remark}
    When making statements that hold for any abstract tropical curve with underlying graph $\Gamma$ (which we refer to as \emph{tropical curves whose topological type is that of $\Gamma$}) we will use the notation $\Gamma$, with no reference to the metric structure.
\end{remark}

\begin{defn}\label{def:parametrized_tropical_curve}
    Let $B$ be a (non-necessarily compact) integral affine manifold.
    A \emph{parametrized tropical curve} in $B$ is the data of an abstract  tropical curve $\Gamma$ together with a proper embedding $h: \Gamma \to B$ such that:
    \begin{itemize}
        \item its restriction to each edge is an integer affine map; that is, $dh(T_\Z e) \subset T_\Z B$ for all $e\in E$;
        \item at each vertex $v$ of $\Gamma$ the balancing condition
        \begin{equation}
            \sum_{e \to v} dh(z_e) = 0
        \end{equation} 
        holds, where $z_e \in T_{\Z,v} e$ is a primitive vector pointing outwards on the direction of $e$.
        The notation $e \to v$ in the summation means summing over all edges $e$ which have $v$ as one of their end vertices.
    \end{itemize}
    Two parametrized tropical curves $h_i: \Gamma_i \to B, i =1,2$ are \emph{equivalent} if there exists a isometry $f: \Gamma_1 \to \Gamma_2$ such that $h_1 = h_2 \circ f$.
\end{defn}

The image $h(\Gamma)$ of a parametrized tropical curve defines a tropical curve in $B$: the weight of  
(the image of) an edge $h(e)$ is the unique positive integer $w_e \in \N$ such that $dh(z_e) = w_e u_e$, where $z_e \in T_{\Z,v} e$ and $u_e \in T_{\Z,h(v)}B$ are both primitive vectors.
Conversely, every tropical curve (that is, every weighted, balanced rational polyhedral complex of pure dimension one) can be parametrized in the sense of \Cref{def:parametrized_tropical_curve}, and such parametrization is unique.
We will therefore use the terms tropical curve and parametrized tropical curve interchangeably.

\begin{remark}
    Our definition of parametrized tropical curve is slightly different from that in \cite[Definition 2.2]{mikhalkin2005enumerative}, where the map $h$ is not required to be an embedding and edges can be contracted to points. 
    Clearly every parametrized tropical curve in our sense is a parametrized tropical curve in the sense of \cite{mikhalkin2005enumerative}.
    Conversely, given a parametrized tropical curve in the sense of \cite{mikhalkin2005enumerative}, there always exists (possibly after changing the underlying abstract tropical curve) a parametrized tropical curve in our sense with the same image in $B$.
    Since the end goal is to have a parametrized description of tropical curves in $B$, both definitions are equivalent in this sense.
    The main advantage of our definition is the following: given any tropical curve $C\subset B$, there exists a \emph{unique} parametrized tropical curve (up to equivalence) with image $C$.
\end{remark}

We also introduce the following notation, which will be useful in later sections.
Let $E = E(\Gamma)$ be the set of edges of $\Gamma$ and $E_\infty \subset E$ the set of semi-infinite edges.
We denote by $V = V(\Gamma)$ the set of vertices of $\Gamma$ and by $\partial\Gamma$ the points at infinity of the unbounded edges of $\Gamma$.\footnote{That is, $\partial\Gamma= \bar{\Gamma}\setminus\Gamma$, where $\bar{\Gamma}$ is the compactification of $\Gamma$.}
Choosing an orientation for each edge we get maps 
\begin{equation}\label{eq:edge_maps}
    v^\pm : E \to V \sqcup \partial\Gamma
\end{equation}
that assign to each edge its initial and final vertex.

 \subsection{Moduli of parametrized tropical curves}\label{sec:moduli}
We denote by $\mcal M_\Gamma^{imm}(B)$  the moduli-space of equivalence classes of immersed parametrized tropical curves in $B$ whose topological type is that of $\Gamma$ (that is, an element in $\mcal M_\Gamma^{imm}(B)$ is a map $h: \Gamma \to B$ satisfying the same conditions as in \Cref{def:parametrized_tropical_curve} but which is only a proper immersion).
Choosing an orientation on each edge gives a preferred generator $u_e \in T_e\Gamma$ for each $e \in E$, which in turn yields an embedding
    \begin{align}\label{eq:embeddingmodulispace}
        \begin{split}
        \mcal M^{imm}_\Gamma(B) & \into B^{V}\x (T_\Z B\x (0,+\infty])^{E}\\
        (h:\Gamma\to B) & \mapsto ((h(v))_{v\in V},(dh(u_e),len(h(e)))_{e\in E}).
        \end{split}
    \end{align}
Here $len(h(e)) \in (0,+\infty]$ is the length of the interval $h(e)$.

\begin{lem}\label{lem:modulispaceisaffine}
The above map gives $\mcal M^{imm}_\Gamma(B)$ the structure of a tropical affine manifold.
\end{lem}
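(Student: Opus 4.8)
The plan is to show that the map in \Cref{eq:embeddingmodulispace} is a locally closed embedding whose image is cut out, in integral affine coordinates, by integral affine equations, so that the tropical affine structure on the ambient space $B^V \times (T_\Z B \times (0,+\infty])^E$ restricts to one on $\mcal M^{imm}_\Gamma(B)$. First I would fix an abstract topological type $\Gamma$ with vertex set $V$ and oriented edge set $E$, and observe that the target $B^V \times (T_\Z B \times (0,+\infty])^E$ is itself a tropical affine manifold: each factor $B$ carries the given structure, each fibre $T_\Z B$ over a point of $B$ is an integral affine space (a torsor over $\Z^n$, hence carrying the integral affine structure of $\R^n$ once we trivialise $TB$ in an affine chart), and $(0,+\infty]$ carries the standard tropical affine structure with $+\infty$ as a boundary point (this is the one-point "tropical compactification" of $(0,\infty)$, used throughout; note $len = +\infty$ exactly on the semi-infinite edges $e \in E_\infty$, so in fact the relevant locus factors as $(0,\infty)^{E\setminus E_\infty}\times\{+\infty\}^{E_\infty}$).

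The key steps are then: (1) check injectivity of the map — two immersed parametrized tropical curves with the same vertex images, the same edge directions $dh(u_e)$, and the same edge lengths are equivalent, because these data reconstruct $h$ on each edge uniquely (an integer affine map on an interval is determined by one endpoint, the direction, and the length) and reconstruct $\Gamma$ up to isometry; (2) identify the image. The image is the subset of tuples $\big((b_v)_{v}, (\xi_e, \ell_e)_e\big)$ satisfying two kinds of constraints. The first is, for each edge $e$ with $v^-(e) = v$, $v^+(e) = w$ (both in $V$, i.e. $e$ a bounded edge), the \emph{edge-closing} condition
\begin{equation*}
    b_w = b_v + \ell_e\,\xi_e
\end{equation*}
(interpreted in an affine chart containing the geodesic $h(e)$; for $e\in E_\infty$ there is no such constraint, only $\ell_e = +\infty$ and an incidence condition at the finite endpoint). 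The second is, at each vertex $v\in V$, the \emph{balancing} condition $\sum_{e\to v}\pm\xi_e = 0$, with signs according to the chosen orientations. Both families of equations are \emph{integral affine} in the ambient integral affine coordinates — the edge-closing relation is of the form $b_w - b_v - \ell_e\xi_e = 0$ but note $\ell_e$ and $\xi_e$ are both variables, so it is bilinear, not affine; this is the point I expect to require care. (3) Resolve this by changing coordinates: replace the pair $(\xi_e,\ell_e)$, where $\xi_e = dh(u_e)$ is \emph{primitive} by construction (weight $w_e$ being separate data, or: here $u_e$ a generator of $T_\Z e$ and $dh(u_e)$ its image, which for an immersion of the given type has fixed primitive direction locally), by the single vector $\eta_e := \ell_e\,\xi_e = dh(\text{displacement of } e) \in T_\Z B$, the integer displacement vector of the edge. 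In the $(b_v, \eta_e)$ coordinates the edge-closing condition becomes the \emph{linear} relation $b_w - b_v = \eta_e$ and balancing becomes $\sum_{e\to v}\pm\eta_e/\ell_e = 0$ — hmm, that reintroduces $\ell_e$. So more carefully: keep direction $\xi_e$ fixed locally (the direction is locally constant on $\mcal M^{imm}_\Gamma(B)$ since it is primitive integral and varies continuously, hence is constant on each connected component), so that near a given curve the only variables are $(b_v)_v$ and $(\ell_e)_e$, the balancing condition $\sum_{e\to v}\pm\xi_e=0$ is automatically satisfied and involves no variables, and the edge-closing conditions $b_w - b_v = \ell_e \xi_e$ are, with the $\xi_e$ now constants, \emph{integral affine} (indeed linear) equations in $(b_v, \ell_e)$. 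This exhibits $\mcal M^{imm}_\Gamma(B)$ locally as the integral-affine-linear subspace of an integral affine manifold cut out by these equations, which inherits a tropical affine structure (a linear subspace defined over $\Z$ of $\R^N$, or its intersection with a product of half-open rays, carries the restricted integral affine structure).

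The main obstacle, as indicated, is the bilinearity of the naive edge-closing equation $b_w = b_v + \ell_e \xi_e$ in the "honest" coordinates $(\xi_e, \ell_e)$; the resolution is to note that the edge directions $\xi_e = dh(u_e)$ are locally constant on the moduli space — being primitive integral vectors depending continuously on the parametrized curve — so that on each connected component (equivalently, in a neighbourhood of any point of $\mcal M^{imm}_\Gamma(B)$) they may be treated as fixed integral constants, at which point the defining equations become integral affine and the claim follows by restricting the ambient tropical affine structure. One should also verify that the resulting coordinate charts are compatible with the ambient $\Aff_\Z$-transition functions (immediate, since restriction of affine charts to an affine subspace gives affine charts) and that the semi-infinite edges are handled correctly — for $e\in E_\infty$ we simply omit the coordinate $\ell_e$ (fixed at $+\infty$) and drop the corresponding edge-closing equation, retaining only balancing and the incidence of $e$'s finite endpoint at $b_{v^-(e)}$, which is again affine-linear. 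I would finally remark that $\mcal M^{imm}_\Gamma(B)$ is a manifold (not just a variety) because the edge-closing equations are transverse: they express some of the $b_v$ in terms of the rest and the $\ell_e$, so locally the moduli space is a graph over a coordinate subspace, hence smooth of the expected dimension $n\cdot(\#\text{components of }\Gamma) + \#\{\text{bounded edges, independent ones}\}$ or the like; the precise count is not needed for the statement.
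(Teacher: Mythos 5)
Your proof is correct and follows essentially the same route as the paper's: identify the image of the map (\ref{eq:embeddingmodulispace}) as the locus cut out by the edge-closing conditions $q_e^+ = q_e^- + (l_e/w_e)f_e$ together with the balancing conditions, and observe that these are integral affine. You are in fact more explicit than the paper on the one delicate point---the apparent bilinearity of the edge-closing condition in the pair $(f_e,l_e)$---which the paper resolves only implicitly by asserting that the tangent space is an integral affine subspace of $TB^V$, i.e.\ by treating the integral (hence locally constant) directions $f_e$ as fixed constants, exactly as you do.
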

\begin{proof}
    Recall that a choice of orientation on the edges also gives maps $v^\pm : E \to V \cup \partial \Gamma$ recording the vertices of an edge $e$ (see \Cref{eq:edge_maps}).
    Given a point
    $$
    (\mathbf{q},\mathbf{f},\mathbf{l}) = ((q_v)_{v \in V},(f_e,l_e)_{e \in E}) \in B^{V}\x (T_\Z B\x (0,+\infty])^{E},
    $$
    we will use the notation $q_e^\pm = q_{v^\pm(e)}$ to denote the values of $\mathbf q$ at the positions corresponding to the vertices of $e$.
    Also, for any point $q \in B$ and any tangent vector $f \in T_{\Z,q}B$, we will denote by 
    \begin{align*}
        \gamma_{f,q}:\R \to  B\\
        \gamma_{f,q}(t) = q + t f
    \end{align*} 
    the curve starting at $q$ and with integral tangent vector $f$ (we parallel transport using the local system $T_\Z B$).
    Then the map above identifies $\mcal M^{imm}_\Gamma(B)$ with (an open subset of) the set of points $ (\mathbf{q},\mathbf{f},\mathbf{l})$ such that:
    \begin{enumerate}
        \item $q^+_e = \gamma_{f_e,q^-_e}(l_e/w_e)$;
        \item $\sum_{e\to v} \pm w_e f_e=0$, where the $\pm$ sign depends on whether $e$ has been oriented as to point away from or into $v$.
\end{enumerate}
    That is, the first condition ensures that the set of points $\mathbf{q}$ can be joined by lines of integer slope (the slopes given by $\mathbf{f}$) and prescribed length, while the second guarantees that the balancing condition is met at each vertex, thus giving a tropical curve. Since the tangent space to (the image of) $\mcal M_\Gamma^{imm}(B)$ is an integer affine subspace of $TB^V$, the result follows.
\end{proof}

\begin{remark}
    The moduli-space of tropical curves and its structure of a tropical affine manifold have appeared in the literature before, see e.g. \cite{gathmann2009tropical} for the case $B = \R^n$.
    We work with a different (local) embedding (compare our \Cref{eq:embeddingmodulispace} with \cite[Proposition 4.7]{gathmann2009tropical}) since it is more convenient for our purposes to remember the images of all the vertices of $\Gamma$ (see e.g. the proof of \Cref{prop:Ziscountableunion}).
\end{remark}

We now consider the restriction of the map (\ref{eq:embeddingmodulispace}) to $\mcal M_\Gamma(B) \subset \mcal M^{imm}_\Gamma(B)$.
Note this is now an embedding.
\begin{cor}\label{cor:modulispaceisaffine}
    The moduli-space $\mcal M_\Gamma(B)$ admits a tropical affine structure.
\end{cor}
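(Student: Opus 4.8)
The plan is to deduce \Cref{cor:modulispaceisaffine} from \Cref{lem:modulispaceisaffine} by exhibiting $\mcal M_\Gamma(B)$ as an \emph{open} subset of $\mcal M^{imm}_\Gamma(B)$, and then invoking the elementary fact that an open subset $U$ of a tropical affine manifold is again one: one simply restricts the integrable lattice, $T_\Z U := T_\Z B|_{U} \subset TU$ (equivalently, restricts the atlas of integral-affine charts). So everything reduces to showing that the locus of \emph{embedded} parametrized tropical curves is open inside the locus of merely \emph{immersed} ones, which is what the remark ``Note this is now an embedding'' preceding the corollary is pointing at.

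First I would observe that, for a proper immersion $h\colon \Gamma \to B$, being an embedding is the same as being injective (a proper continuous injection from $\Gamma$ is automatically a homeomorphism onto its image), and that injectivity of $h$ fails precisely when one of finitely many combinatorial events occurs: two distinct vertices collide, two edges with no common vertex intersect, or two edges sharing a vertex $v$ fail to meet only at $v$ (equivalently, their outgoing primitive directions at $v$ become positively proportional). I would then check that each of these is a closed condition near a given embedded curve. The key point is that in the chart (\ref{eq:embeddingmodulispace}) the edge-slope coordinates take values in the \emph{discrete} space $T_\Z B$, so along any sufficiently small deformation the slopes $dh(u_e)$---hence the directions of all edges, and in particular the asymptotic directions of the semi-infinite edges---are locally constant, while only the vertex positions in $B^{V}$ and the finite edge lengths in $(0,+\infty]$ vary continuously. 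With the directions frozen, two segments emanating from a common vertex with non-positively-proportional directions keep meeting only at that vertex; two disjoint compact segments stay disjoint under a small perturbation of their endpoints; distinctness of images of distinct vertices is plainly an open condition on $B^{V}$; and finiteness of $V$ and $E$ lets one intersect the finitely many resulting open conditions.

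The one point I expect to require a little care is the behaviour at the ends, i.e.\ verifying that properness of $h$ is itself preserved under small deformations and that no two semi-infinite edges can be made to collide at infinity. But this is again handled by the discreteness of $T_\Z B$: the directions at infinity cannot jump, so a pair of disjoint ends stays disjoint (parallel ends remain a fixed positive distance apart, non-parallel ends keep diverging) and properness persists; and when $B$ is compact there are no semi-infinite edges at all, by properness, so the issue is vacuous. Once openness is established, the restricted map (\ref{eq:embeddingmodulispace}) realizes $\mcal M_\Gamma(B)$ as an honestly embedded, locally closed integral-affine subset of $B^{V} \x (T_\Z B \x (0,+\infty])^{E}$, carrying the tropical affine structure induced from $\mcal M^{imm}_\Gamma(B)$, which proves the corollary.
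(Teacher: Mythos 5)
Your proposal is correct and takes exactly the same route as the paper: the paper's entire proof is the observation that $\mcal M_\Gamma(B)$ is open inside $\mcal M^{imm}_\Gamma(B)$, so the tropical affine structure from \Cref{lem:modulispaceisaffine} restricts. Your argument merely supplies the detailed verification of openness (via discreteness of the edge-slope coordinates in $T_\Z B$) that the paper leaves implicit.
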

\begin{proof}
    This follows from \Cref{lem:modulispaceisaffine} together with the fact that $\mcal M_\Gamma(B) \subset \mcal M^{imm}_\Gamma(B)$ is open.
\end{proof}

 \subsection{Deformations of tropical curves}\label{sec:deformations}
Let $\Gamma$ be an abstract tropical curve.
\begin{defn}\label{def:cotangent_sheaf}
    The \emph{sheaf of locally constant $1$-forms on $\Gamma$} is the sheaf
    $$
        \mcal T^*(U)=\left\lbrace\begin{array}{ll}
        H^0(T^*_\Z e)\otimes\R, & U\subset e\\
        \ker(f:\displaystyle\bigoplus_{e\to v} T_v^*e\to \R), & U \tx{ neighborhood of a vertex } v
        \end{array} \right.,
    $$
    where $f(\alpha_1,\dots,\alpha_k)=\sum_i \alpha_i(u_{e_i})$, $u_{e_i}$ being a primitive vector pointing outwards on the direction of $e_i$.
\end{defn} 
\begin{remark}
    Note that, over an open set $U \subset e$, we have that $H^0(T^*_\Z e)\otimes\R \cong \R$, an isomorphism given by $(u, t) \mapsto tu$.
\end{remark}
\begin{example}
    Consider the curve $\Gamma$ in \Cref{fig:sheaf}.
    Then any tuple of the form 
    $$
        (adx + bdy, -bdy, -adx) \in \bigoplus_{e_i \to v} T^*_v e_i, \quad a, b \in \R
    $$
    is a section of $\mcal T$ over $U$, since $(adx + bdy)(u_1) -bdy(u_2) - adx(u_3) = 0$.
    Further, the restriction of $\mcal T^*$ to an open set $U_j \subset U$ contained in the $j$-th vertex is just the projection $\oplus_{e_i \to v} T^*_v e_j \to T^*_v e_j \cong H^0(T^*_{\Z, v} e_j) \otimes \R$.
\end{example}

\begin{figure}
    \centering
    \begin{tikzpicture}[thick, scale=0.8, shift={(2,0)}]
\draw[step=1cm,gray,very thin] (-4,-2) grid (2,4);
        
        \draw[red] (-4,1) -- (-1,1)  -- (2,4);
        \draw[red] (-1,1) -- (-1,-2);
        \node at (-1.2, 0.8) {$v$};

        \draw[dashed] (-1,1) circle (2);
        \node at (-3.3, 0.5) {$U$};

\draw[->] (-1, 1) -- (0, 2);
        \node at (-0.6, 1.7) {$u_1$};

        \draw[->] (-1, 1) -- (-1, 0);
        \node at (-0.5, 0.5) {$u_2$};

        \draw[->] (-1, 1) -- (-2, 1);
        \node at (-1.5, 1.3) {$u_3$};

    \end{tikzpicture}
\caption{A tropical curve (in red) together with a collection of primitive integral vectors, one for each edge, pointing outwards.}
\label{fig:sheaf}
\end{figure}

The following is a relative version of \cite[Lemma 3.5]{sheridan2021lagrangian} and \cite[Lemma 6.1]{mikhalkin2008tropical}.
We use the notation introduced at the end of  \Cref{sec:backgroundtropicalcurves}.
\begin{lem}\label{lem:1formsandhomology}
    There is an isomorphism $H^0(\mcal T^*)\cong H_1(\bar{\Gamma},\partial\Gamma;\R)$.
\end{lem}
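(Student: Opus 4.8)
The plan is to construct explicit maps in both directions and check they are mutually inverse. First I would set up the de Rham–type complex computing $H^0(\mcal T^*)$: since $\Gamma$ is a one-dimensional complex, a global section of $\mcal T^*$ is the data of a locally constant $1$-form on each edge (equivalently, for each oriented edge $e$, a real number $c_e$ giving the coefficient of the generator $dx_e \in H^0(T^*_\Z e)\otimes\R$), subject to the balancing constraint at each vertex $v$ that $\sum_{e\to v} c_e\, dx_e(u_e) = 0$, i.e. $\sum_{e \to v} \pm c_e = 0$ once one accounts for whether $e$ is oriented towards or away from $v$. So $H^0(\mcal T^*)$ is literally the kernel of the (transpose of the) boundary map $\partial\colon \R^{E} \to \R^{V}$, where we only impose the relation at \emph{interior} vertices and the semi-infinite edges contribute free coordinates whose "other endpoint" lies in $\partial\Gamma$. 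This is exactly the group of cellular $1$-cycles of the pair $(\bar\Gamma, \partial\Gamma)$ with real coefficients: a $1$-chain $\sum c_e\, e$ is a relative cycle precisely when its boundary is supported on $\partial\Gamma$, which is the same balancing condition.

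Concretely, I would define the map $H_1(\bar\Gamma,\partial\Gamma;\R) \to H^0(\mcal T^*)$ by sending the class of a relative $1$-cycle $\sigma = \sum_e c_e\, e$ to the $1$-form that restricts to $c_e\, dx_e$ on each edge $e$ (with $dx_e$ the generator compatible with the chosen orientation). The balancing/cycle condition makes this a well-defined section of $\mcal T^*$, and since $\bar\Gamma$ is a graph there are no $2$-chains, so every relative $1$-cycle already represents its homology class uniquely — the map is well-defined on homology for the trivial reason that the space of relative $1$-cycles equals the relative $H_1$. For the inverse, given $\omega \in H^0(\mcal T^*)$ read off the coefficients $c_e = \omega|_e / dx_e \in \R$ and form the chain $\sum_e c_e\, e$; the vertex condition in \Cref{def:cotangent_sheaf} is exactly the statement that this chain is a relative cycle. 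The two constructions are visibly inverse to each other.

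The one genuine point to be careful about — and the place I expect to spend most of the effort — is matching conventions at the semi-infinite edges and at $\partial\Gamma$. I need the cellular chain complex of the pair $(\bar\Gamma,\partial\Gamma)$ to be set up so that a semi-infinite edge $e$ contributes a free generator and its boundary is $v^+(e) - v^-(e)$ with one of these endpoints in $\partial\Gamma$ (hence killed in the relative complex); this is what makes the relative-cycle condition impose a constraint only at interior vertices, exactly as the sheaf $\mcal T^*$ does (which only imposes a vertex condition at honest vertices $v \in V$, and is just $H^0(T^*_\Z e)\otimes\R$ near a point of a semi-infinite edge). I would also need to note that completeness of the metric plays no role here — it is only the combinatorial type $\Gamma$ that matters, consistent with the remark that such statements depend only on $\Gamma$. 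Finally I would remark that this recovers \cite[Lemma 3.5]{sheridan2021lagrangian} and \cite[Lemma 6.1]{mikhalkin2008tropical} when $\partial\Gamma = \emptyset$ (so $\bar\Gamma = \Gamma$ is closed), which is the "absolute" case, and that the present relative version is what will be needed when $\Gamma$ has unbounded edges.
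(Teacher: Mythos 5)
Your proposal is correct and follows essentially the same route as the paper: both identify $H^0(\mcal T^*)$ with the kernel of the boundary map $\R^E\to\R^{V}$ (via the preferred generators $dx_e$ coming from a choice of edge orientations) and identify that kernel with the relative $1$-cycles, which compute $H_1(\bar\Gamma,\partial\Gamma;\R)$ since there are no $2$-chains. Your explicit description of the inverse map and the bookkeeping at $\partial\Gamma$ matches the paper's \Cref{rmk:explicitiso}.
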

\begin{proof}
    Consider the linear map
    \begin{align*}
        g:\R^E & \to \R^{V\sqcup\partial\Gamma} \to \R^V
    \end{align*}
    given by composing $1_e \mapsto 1_{v^+(e)} - 1_{v^-(e)}$ with the projection $\R^{V\sqcup\partial\Gamma} \to \R^V$, where $1_e$ is the generator of the $\R$-factor corresponding to $e$, $1_v$ is the generator of the $\R$-factor corresponding to $v$ and $v^\pm: E \to V\sqcup \partial\Gamma$ are the maps defined in \Cref{eq:edge_maps}.
    First note that the group $H_1(\bar{\Gamma},\partial\Gamma;\R)$ is canonically identified with the kernel of $g$ using simplicial homology.
    On the other hand, a choice of orientation for the edges of $\Gamma$ gives preferred generators $u_e \in T_\Z e$ and hence preferred isomorphisms $H^0(T^*_\Z e) \otimes \R \cong \R$.
    It is then clear that an element of $\oplus_e H^0(T^*_\Z e) \otimes \R$ extends to a global section of $\mcal T^*$---that is, it is in the kernel of $f$---if and only if the corresponding element in $\R^E$ lies in the kernel of $g$. \end{proof}

\begin{rmk}\label{rmk:explicitiso}
    Using a simplicial model for $H_1(\bar{\Gamma},\partial\Gamma;\R)$, an explicit isomorphism $\eta:\mcal T^*(\Gamma) \xra{\sim}H_1(\bar{\Gamma},\partial\Gamma;\R)$  is given by sending a $1$-form $\alpha\in T^*e$ to the $1$-chain $\alpha(u_e)e \in C_1^{simp}(\bar{\Gamma};\R)$, where $u_e$ is a primitive vector in $T_\Z e$ whose direction is determined by the orientation of $e$.
\end{rmk}

Now let $h:\Gamma\to B$ be a parametrized tropical curve in $B$; for simplicity, we will refer to it as $\Gamma$, even though the map $h$ is part of the definition.
\begin{defn}    
    The \emph{sheaf of deformations of $\Gamma$} is the sheaf
    $$
        \mcal D(U)=\left\lbrace
        \begin{array}{ll}
        T_p B/T_p h(e), & p \in U\subset e\\
        T_vB, & U \tx{ neighborhood of the vertex }v
        \end{array} 
        \right..
    $$
    Note that there are canonical isomorphisms $T_p B/T_p h(e) \cong T_q B/T_q h(e)$ for any $p,q \in e$ given by parallel transport, hence the sheaf is well-defined.
    The restriction maps are given by the projections $T_v B\to T_vB/T_v e \cong T_pB/T_p e$, the latter isomorphism given again by parallel transport. 
    A \emph{deformation} of $\Gamma$ is a section of $\mcal D$, and we write $\deff(\Gamma) := H^0(\mcal D)$ for the group of (global) deformations.
\end{defn}

\begin{example}
    Consider again the tropical curve in \Cref{fig:sheaf}. 
    The vector $v_1 \in T^*_v B$ is a section of $\mcal D(U)$.
    The restriction to an open set $U_j \subset U$ contained in the $j$-th edge is just its equivalence class $[v_1] \in T_v B / T_v e_j$, which can be thought of as the projection of $v_1$ to an orthogonal complement to $e_j$.
    The same applies to $v_2$ and $v_3$.
\end{example}

\begin{lem}\label{lem:deformationsaretangentspace}
    There is an identification $T_h \mcal M_\Gamma(B) \cong \deff(\Gamma)$.
\end{lem}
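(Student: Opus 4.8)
The plan is to identify both sides with the kernel of an explicit linear system and match them edge-by-edge and vertex-by-vertex. By \Cref{lem:modulispaceisaffine} (and \Cref{cor:modulispaceisaffine}), the moduli space $\mcal M_\Gamma(B)$ sits inside $B^V \x (T_\Z B \x (0,+\infty])^E$ as the solution set of the two conditions in that proof: the ``endpoint'' condition $q_e^+ = \gamma_{f_e,q_e^-}(l_e/w_e)$ for each edge, and the balancing condition $\sum_{e\to v}\pm w_e f_e = 0$ at each vertex. Differentiating this presentation at a point $h$, the tangent space $T_h\mcal M_\Gamma(B)$ is the kernel of the linearization of these constraints inside $\bigoplus_{v} T_{h(v)}B \oplus \bigoplus_{e}(T_{\Z,h(e)}B\otimes\R \oplus \R)$, where the first summand records deformations of the vertex positions, the middle records deformations of the edge directions, and the last records deformations of the lengths.

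\textbf{Key steps.} First I would write the linearized endpoint equation along an edge $e$ with vertices $v^\pm(e)$. In affine coordinates near $h(e)$ the edge is a straight segment, and an infinitesimal deformation moving $v^-(e)$ by $\xi^- \in T_{h(v^-)}B$, $v^+(e)$ by $\xi^+$, the direction by $\delta f_e$ and the length by $\delta l_e$ must satisfy $\xi^+ - \xi^- = (\delta l_e / w_e) u_e + (l_e/w_e)\,\delta f_e$, where $u_e$ is the unit direction. The crucial observation is that modulo $Te$ (the line spanned by $u_e$) this says precisely $\xi^+ \equiv \xi^- \pmod{Te}$: the projections of the two endpoint deformations to $TB/Te$ agree, and conversely, given any two endpoint vectors agreeing mod $Te$, there is a unique $(\delta f_e,\delta l_e)$ realizing them. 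Hence the data of $(\xi,\delta f,\delta l)$ satisfying all endpoint equations is equivalent to the data of a single vector $\eta_e \in (TB/Te)_{h(e)}$ per edge that is compatible at each vertex with the chosen vertex-deformation $\xi_v$, which is exactly a section of the restriction of $\mcal D$ to the edge together with the stalk value at the vertex. Second, I would check the remaining balancing constraint $\sum_{e\to v}\pm w_e\,\delta f_e = 0$: combining it with the relation $\xi^+ - \xi^- = (\delta l_e/w_e)u_e + (l_e/w_e)\delta f_e$ and the balancing condition of $h$ itself (so that the $u_e$-terms and $\delta l_e$-terms reorganize), one sees this imposes no condition beyond the compatibility of the $\eta_e$ with a common vertex vector $\xi_v\in T_{h(v)}B$ — i.e. it is automatic once one has a well-defined stalk value of $\mcal D$ at $v$, which is just an element of $T_vB$. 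So the kernel is in bijection with assignments $v\mapsto \xi_v\in T_{h(v)}B$ and $e\mapsto \eta_e \in (TB/Te)$ agreeing over incidences, which is by definition $H^0(\mcal D) = \deff(\Gamma)$. Finally I would note naturality: the identification is the derivative of the inclusion \Cref{eq:embeddingmodulispace} followed by the projection $TB\to TB/Te$ on edge factors, so it is canonical and independent of the chosen orientations and affine charts.

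\textbf{Main obstacle.} The main subtlety is bookkeeping the weights $w_e$ and the distinction between the primitive edge vector $z_e$, its image $dh(z_e) = w_e u_e$, and the length parameter $l_e = \mathrm{len}(h(e))$ correctly, so that the linearized endpoint and balancing equations combine exactly right; in particular one must verify that the $\delta l_e$ (length) directions and the ``$u_e$-components'' of the $\delta f_e$ are precisely the degrees of freedom that get killed when passing to $TB/Te$, so that what survives is genuinely $H^0(\mcal D)$ and nothing more or less. Once the edge-local computation $\xi^+ - \xi^- = (\delta l_e/w_e)u_e + (l_e/w_e)\delta f_e$ is in hand, the rest is a formal matching of the two sheaf-theoretic descriptions, and no further analytic input is needed since everything is piecewise-linear.
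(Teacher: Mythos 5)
Your overall strategy is the same as the paper's: differentiate the presentation of $\mcal M_\Gamma(B)$ from \Cref{lem:modulispaceisaffine} inside the ambient space of \Cref{eq:embeddingmodulispace} and match the resulting linear conditions with the description of a section of $\mcal D$ by its vertex stalks subject to $[\xi_{v^+(e)}]=[\xi_{v^-(e)}]$ in $TB/Te$. However, there is a genuine gap in your central computation. You treat the edge direction $f_e=dh(u_e)$ as continuously deformable, writing the linearized endpoint equation as $\xi^+-\xi^- = (\delta l_e/w_e)u_e + (l_e/w_e)\,\delta f_e$ with $\delta f_e$ ranging over the full $n$-dimensional space $T_{\Z,h(e)}B\otimes\R$. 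With $\delta f_e$ free this equation does \emph{not} force $\xi^+\equiv\xi^-\pmod{Te}$: reducing mod $Te$ gives $[\xi^+-\xi^-]=(l_e/w_e)[\delta f_e]$, so \emph{any} pair of endpoint displacements can be realized by tilting the edge, and the claimed uniqueness of $(\delta f_e,\delta l_e)$ also fails (for fixed $\xi^\pm$ the solutions form a one-parameter family). Your argument as written would therefore identify the tangent space with something strictly larger than $\deff(\Gamma)$, constrained only by the linearized balancing condition.

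The missing ingredient is the integrality of the direction data: $f_e$ lives in the lattice $T_\Z B$, whose fibers are discrete, so the tangent space to the factor $T_\Z B$ at $(q,f_e)$ has no fiber component --- in affine coordinates a deformation cannot change $f_e$ at all, i.e.\ $\delta f_e=0$. Once this is imposed, the linearized endpoint equation becomes $\xi^+-\xi^-=(\delta l_e/w_e)u_e$, which is exactly the condition $[\xi^+]=[\xi^-]$ in $TB/Te$ (with $\delta l_e$ then uniquely determined), and the linearized balancing condition is vacuous rather than ``automatic after reorganizing terms.'' This rigidity of integral edge directions is precisely what makes the lemma true, so it cannot be elided; with that correction your argument closes up and coincides with the paper's proof.
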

\begin{proof}
    A deformation of $\Gamma$---that is, a global section of $\mcal D$---is determined by its stalks on the vertices; that is, by an element 
    $$
    (u_v)_{v\in V} \in \oplus_v T_v B.
    $$
    Such a tuple determines a deformation if and only if $[u_{v^+(e)}] = [u_{v^-(e)}] \in TB/Te$ for every edge $e$.

    Let us now show how to view $T_h\mcal M_\Gamma(B)$ as the same subset of $\oplus_v T_v B$.
    Recall from the proof of \Cref{lem:modulispaceisaffine} that the embedding (\ref{eq:embeddingmodulispace}) identifies $\mcal M_\Gamma(B)$ with (an open subset of) the set of points
    $$
    (\mathbf{q},\mathbf{f},\mathbf{l}) = ((q_v)_{v \in V},(f_e,l_e)_{e \in E}) \in B^{V}\x (T_\Z B\x (0,+\infty])^{E},
    $$
    such that:
    \begin{enumerate}
        \item $q^+_e = \gamma_{f_e,q^-_e}(l_e/w_e)$;
        \item $\sum_{e\to v} \pm w_e f_e=0$, where the $\pm$ sign depends on whether $e$ has been oriented as to point away from or into $v$.
\end{enumerate}
    Here, $q_e^\pm$ are the ends of an edge $e \in E$ and $\gamma_{f_e,q_e^-}: \R \to B$ is the curve whose tangent vector is (the parallel transport of) $f_e \in T_{\Z,q_e^-} B$ and such that $\gamma_{f_e,q_e^-}(0) = q_e^-$.
    In particular, note that the projection 
    $$
    \mcal M_\Gamma(B) \into B^{V}\x (T_\Z B\x (0,+\infty])^{E} \to B^V
    $$
    is a local embedding (once the position of the vertices is known, the curve is fixed).
    It follows that we can identify $T_h\mcal M_\Gamma(B)$ with a subset of $\oplus_{v} T_{h(v)} B$.
    Lastly, given $(u_v)_{v \in V} \in \oplus_{v} T_{h(v)} B$, condition 1. above is equivalent to $[u_{v^+(e)}] = [u_{v^-(e)}] \in TB/Te$ for every edge $e$.
    Indeed, once we perturb $v^-(e)$ via $u_{v^-(e)}$, the fact that $v^\pm(e)$ are connected by a curve of direction $f_e$ implies that $u_{v^+(e)} = u_{v^-(e)} + \lambda f_e$, where $\lambda = 0$ if and only if the perturbed edge keeps the same length (see \Cref{fig:perturbation}).
\end{proof}

\begin{figure}
    \centering
         \begin{tikzpicture}[thick] 
            \begin{scope}[scale=0.8]
\begin{scope}
                    \draw[red] (-4,0) -- (-1,0)  -- (0,1) -- (0,-2);
                    \draw[red] (-1,0) -- (-1,-2);
                    \draw[red] (0,1) -- (1,4);
                \end{scope}

                \begin{scope}[shift={(-0.4, 0.7)}]
                    \draw[red, dashed] (-4,0) -- (-1,0)  -- (0,1) -- (0,-2);
                    \draw[red, dashed] (-1,0) -- (-1,-2);
                    \draw[red, dashed] (0,1) -- (1,4);
                \end{scope}

                \draw[->] (-1, 0) -- (-1.4, 0.7);
                \node at (-1.8, 1) {$u_{v^-(e)}$};
                \draw[->] (0, 1) -- (-0.4, 1.7);
                \node at (-1.2, 1.7) {$u_{v^+(e)}$};
            \end{scope}

            \begin{scope}[scale=0.8, shift={(8, 0)}]
\begin{scope}
                    \draw[red] (-4,0) -- (-1,0)  -- (0,1) -- (0,-2);
                    \draw[red] (-1,0) -- (-1,-2);
                    \draw[red] (0,1) -- (1,4);
                \end{scope}

                    \draw[red, dashed] (-4,0) -- (-1,0)  -- (0.5, 1.5) -- (0.5,-2);
                    \draw[red, dashed] (-1,0) -- (-1,-2);
                    \draw[red, dashed] (0.5,1.5) -- (1.5, 4.5);

                    \node at (-1.7, 0.6) {$u_{v^-(e)} = 0$};
                    \draw[->] (0, 1) -- (0.5, 1.5);
                    \node at (1.2, 1.4) {$u_{v^+(e)}$};
            \end{scope}
        \end{tikzpicture}
    
    \caption{Two types of deformations of a tropical curve. On the left, we have deformations where $u_{v^-(e)} = u_{v^+(e)} \in TB$, i.e. \emph{translations} of the curve. On the right we have deformations where $u_{v^-(e)}=0$ and $u_{v^+(e)} \in Te$ (in particular, $[u_{v^-(e)}] = [u_{v^+(e)}] = 0 \in TB/Te$). These deformations \emph{expand} (or contract) one edge. Similarly, one could have deformations with $u_{v^-(e)}\in Te$ and $u_{v^+(e)} = 0$.}
    \label{fig:perturbation}
\end{figure}

We have defined two sheaves: the sheaf $\mcal T^*$ of locally constant $1$-forms and the sheaf $\mcal D$ of deformations.
The following Lemma relates the two sheaves:

\begin{lem}\label{lem:deformationsand1forms}
    For every locally constant $(k+1)$-form $\omega \in H^0(\wedge^{k+1}T^*B)$, the map
    \begin{align*}
        \wedge^k T_v B &\to \oplus_{e \to v} T_v^*e\\
        u& \mapsto (w_e\iota_u \omega)_{e\to v}
    \end{align*}
    extends to a sheaf homomorphism
    $$
    \Phi_\omega: \wedge^{k}\mcal D \to \mcal T^*.
    $$
    Here we denote by $\iota_u \omega = \omega(u\wedge-)$ the contraction of $\omega$ with $u$.
\end{lem}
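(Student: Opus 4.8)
The plan is to check that the prescribed maps on vertex stalks and on edge stalks are compatible with the restriction maps of $\wedge^k \mcal D$ and $\mcal T^*$, so that they glue to a sheaf morphism. Since both $\wedge^k\mcal D$ and $\mcal T^*$ are constructible with respect to the stratification of $\bar\Gamma$ by vertices and (open) edges, a sheaf homomorphism is exactly the data of: (i) a map of stalks at each vertex $v$, namely $\wedge^k T_vB \to \ker(f\colon \oplus_{e\to v}T_v^*e\to\R)$; (ii) a map of stalks on each open edge $e$, namely $\wedge^k(T_vB/Te) \to H^0(T_\Z^*e)\otimes\R$; together with the requirement that for every edge $e$ incident to a vertex $v$ the square relating the two, via the restriction $\wedge^k\mcal D(v)\to\wedge^k\mcal D(e)$ (projection $T_vB\to T_vB/Te$) and the restriction $\mcal T^*(v)\to\mcal T^*(e)$ (projection onto the $e$-component), commutes.

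First I would define the edge-stalk map. On an open edge $e$, a section of $\wedge^k\mcal D$ is an element of $\wedge^k(TB/Te)$; I send it to the $1$-form $w_e\,\iota_{\tilde u}\omega\restr{e}$ on $e$, where $\tilde u$ is any lift to $\wedge^k TB$ — this is well-defined because contracting $\omega$ with a multiple of the edge tangent $u_e$ and then restricting to $e$ kills the result (the restricted $1$-form $\iota_{u_e\wedge\cdots}\omega$ evaluated on $u_e$ vanishes by antisymmetry, and $T_\Z^*e$ is one-dimensional so a $1$-form on $e$ is determined by its value on $u_e$). One also checks this form is locally constant on $e$ since $\omega$ is locally constant, giving a genuine section of $H^0(T_\Z^*e)\otimes\R$. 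Next, the vertex-stalk map $u\mapsto (w_e\iota_u\omega)_{e\to v}$ must land in $\ker f$: this is precisely the computation $f\big((w_e\iota_u\omega)_e\big)=\sum_{e\to v} w_e\,\omega(u\wedge z_e)=\omega\big(u\wedge \sum_{e\to v} w_e z_e\big)=0$, using the balancing condition $\sum_{e\to v} w_e z_e = \sum_{e\to v} dh(z_e)=0$ at $v$ (here $z_e\in T_{\Z,v}e$ is the outward primitive vector, and $w_e z_e$ is identified with $dh(z_e)\in T_{\Z,v}B$). Finally the compatibility square: restricting the vertex section $(w_e\iota_u\omega)_{e\to v}$ to the $e$-component gives $w_e\iota_u\omega\restr e$, which agrees with the edge-stalk map applied to the image of $u$ in $\wedge^k(T_vB/Te)$ by the very definition above.

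The only genuine content — and the step I expect to require the most care — is the well-definedness on edges, i.e. that $\iota_{u_e\wedge v_2\wedge\cdots\wedge v_k}\omega$ restricts to zero as a $1$-form on $e$; everything else is bookkeeping with the constructible-sheaf description and a one-line use of the balancing condition. I would spell out the one-dimensionality of $T_\Z^* e$ explicitly: a $1$-form $\beta$ on $e$ vanishes iff $\beta(u_e)=0$, and $(\iota_{u_e\wedge\cdots}\omega)(u_e)=\omega(u_e\wedge u_e\wedge\cdots)=0$. With that in hand the two stalk-maps assemble into the claimed $\Phi_\omega\colon\wedge^k\mcal D\to\mcal T^*$, and naturality of contraction in $\omega$ and additivity are immediate, completing the proof.
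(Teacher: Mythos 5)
Your proposal is correct and follows essentially the same route as the paper: the substantive step in both is the balancing-condition computation $\sum_{e\to v} w_e\,\omega(u\wedge u_e)=\omega\bigl(u\wedge\sum_{e\to v}w_e u_e\bigr)=0$ showing the vertex map lands in $\ker(f)$. The paper dismisses compatibility with restriction as clear, whereas you spell out the edge-stalk map on $\wedge^k(TB/Te)$ and its well-definedness via antisymmetry; this is a correct and welcome elaboration rather than a different argument.
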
   
\begin{proof}
    The map is clearly compatible with restriction.
    Hence it is enough to show that $(w_e\iota_u \omega)_{e\to v} \in \ker(f)$ (cf. \Cref{def:cotangent_sheaf}).
    This follows from the balancing condition: if $u = u_1\wedge\dots\wedge u_k \in \wedge^kT_vB$, then one has 
    $$
    f((w_e\iota_u \omega)_{e\to v}) = \sum_{e\to v} w_e \omega(u\wedge u_{e}) =   \omega(u\wedge \sum_{e\to v} w_e u_{e}) = 0
    $$ since $\sum_{e\to v} w_e u_{e} = 0$.
\end{proof}

Putting together Lemmas \ref{lem:1formsandhomology} and \ref{lem:deformationsand1forms}, we obtain a map
$$
\wedge^k \tx{def}(\Gamma) = \wedge^k H^0(\mcal D) \to H^0(\mcal T^*) \cong H_1(\bar{\Gamma},\partial\Gamma;\R).
$$

 \subsection{Tropical linear equivalence}\label{sec:tropical_linear_equivalence}
We now introduce a particular type of tropical curves in $B \x \R$; these will be used to define the equivalence relation of tropical linear equivalence (see \Cref{def:tropical_linear_equivalence}).
From now on we assume that $B$ is compact.

\begin{defn}\label{def:horizontal_at_infinity}
    We say that a parametrized tropical curve $h:\Gamma \to B \x \R$ is \emph{horizontal at infinity} if the composition $pr_B \circ h\restr{e}$ is a constant map for every semi-infinite edge $e \in E_\infty$.
\end{defn}

We denote by $\mcal M^{hor}_\Gamma (B \x \R) \subset \mcal M_\Gamma (B \x \R)$ the moduli-space of parametrized tropical curves $h: \Gamma \to B \x \R$ which are horizontal at infinity (that is, those tropical curves in $B \x \R$ that are horizontal at infinity and whose topological type is that of $\Gamma$).
Then it follows from  \Cref{def:horizontal_at_infinity} that there are  well-defined maps
\begin{align}\label{eq:evaluation_at_infinity}
    \begin{split}
        ev_{\pm\infty}: \mcal M^{hor}_\Gamma (B \x \R) &\to \prod_{e\in E_{\pm\infty}} B^{w_e}\\
        h &\mapsto ((pr_B \circ h(e))^{w_e})_{e \in E_{\pm\infty}}
    \end{split}
\end{align}
that record the images, with multiplicity, of the semi-infinite edges of $\Gamma$ asymptotic to $\pm\infty$.
Here, we used the notation $b^n := (b,\dots,b) \in B^n$ for $b \in B$ and $n \in \N$, and we decompose $E_\infty = E_{-\infty} \cup E_{+\infty}$ depending on whether (the image of) an edge is asymptotic to $B \x \{\pm \infty\}$.

Note that the balancing condition implies that $\sum_{e \in E_{\pm\infty}} \pm w_e = 0$ (see \cite[Proposition 3.2, (e)]{allermann2016rational} for a proof, together with \Cref{lem:relations_equivalent}), hence the maps $ev_{-\infty}$ and $ev_\infty$ have as target a common $B^n$.
Composing $(ev_{-\infty},ev_{+\infty})$ with the map
\begin{align}\label{eq:projection_to_Z0hom}
    \begin{split}
        B^n \x B^n &\to  Z_0(B)\\
        (\mathbf{b}^+,\mathbf{b}^-) &\mapsto \sum_i (b_i^+ - b_i^-)
    \end{split}
\end{align}
yields a map 
$$
\mcal M^{hor}_\Gamma (B \x \R) \to Z_0(B).
$$
Let $R_0(B) \subset Z_0(B)$ be the subgroup generated by the image of $\cup_\Gamma \mcal M^{hor}_\Gamma(B \x \R) \to Z_0(B)$.

\begin{defn}\label{def:tropical_linear_equivalence}
    We say $0$-cycles $V_\pm \in Z_0(B)$ are \emph{(tropically) linearly equivalent}  if $V_+ - V_- \in R_0(B)$.\footnote{Roughly speaking, one can think of two cycles $V_\pm$ being tropically linearly equivalent if there exists a tropical curve $h:\Gamma \to B \x \R$ which coincides with $V_\pm \x \R$ on a neighborhood of $B \x \{\pm\infty\}$. This is however not precise: for instance, the cycles $V_\pm$ could be tropically linearly equivalent because there is a tropical curve $h:\Gamma \to B \x \R$ which coincides with $(V_\pm \cup W)\x \R$ on a neighborhood of $B \x \{\pm\infty\}$ for some set of points $W \subset B$, but a curve coinciding with $V_\pm \x \R$ need not exist.}
    The \emph{tropical $0$-th Chow group} $\CH_0(B)$ is the quotient 
    $$
    \CH_0(B) =Z_0(B)/R_0(B).
    $$
\end{defn}

Although this definition seems different from \Cref{def:AllermanRationalEquivalence}, we have:
\begin{lem}\label{lem:relations_equivalent}
    Let $\widetilde{\CH_0}(B)$ be the Chow group obtained with the definition of rational equivalence in \Cref{def:AllermanRationalEquivalence}.
    Then there is an isomorphism $\widetilde{\CH_0}(B) \cong\CH_0(B)$.
\end{lem}
\begin{proof}
    The proof is identical to that in \cite[Proposition 3.5]{allermann2016rational} and uses the `full graph' construction of \cite{mikhalkin2006tropical} (also named `tropical graph cobordism' in \cite{allermann2010first}), see \Cref{fig:graph}
\end{proof}

In the remaining of this paper we will use \Cref{def:tropical_linear_equivalence}, as it is more convenient for our proofs.

\begin{figure}
    \begin{tikzpicture}[scale=1, line cap=round, line join=round, >=stealth]

\draw[thick] (0,0) -- (3.5,0);
    \node at (0, -0.5) {$B$};

    \draw[thick, ->] (-0.5, 0.5) -- (-0.5, 1.5);
    \node at (-0.5, 1.7) {$\R$};
    
    \draw[thick, red] (0,1) -- (1,1) -- (1.5, 1.5) -- (2, 1.5) -- (2.5, 1.0) -- (3.5,1.0);
    \node[red] at (1.75, 1.75) {$\text{graph}(f)\subset B \times \R$};

\draw[thick,->] (4,0.9) -- (7.5,0.9);

\draw[thick, red] (8,1) -- (9,1) -- (9.5, 1.5) -- (10, 1.5) -- (10.5, 1.0) -- (11.5,1.0);
    \draw[thick, red] (9, 1) -- (9, -2);
    \draw[thick, red] (9.5, 1.5) -- (9.5, 4);
    \draw[thick, red] (10, 1.5) -- (10, 4);
    \draw[thick, red] (10.5, 1) -- (10.5, -2);

    \node[red] at (7.5, 3) {$\text{full graph}(f) \subset B \times \R$};

    \end{tikzpicture}

    \label{fig:graph}
    \caption{Schematic picture of full graph construction. Given a function $f : B \to \R$ (whose graph we represent in the left), the full graph construction extends its bending locus towards $\pm\infty\in \R$, creating a curve that is horizontal at infinity.}
\end{figure} 
\section{Tropical forms and dimensionality of tropical Chow groups}\label{sec:formsanddimension}
\subsection{Infinite dimensionality}
Let $\Psi: B^{2n} \to \CH_0(B)$ be the composition of the map in \Cref{eq:projection_to_Z0hom} with the projection $Z_0(B) \to \CH_0(B)$; that is, $\Psi$ is the map defined by
$$
\Psi(\mathbf{b}^+,\mathbf{b}^-) = \sum_{i=1}^n ([b^+_i] - [b^-_i]),
$$
where $\mathbf{b}^\pm = (b^\pm_1,\dots,b^\pm_n) \in B^n$.
We consider the subset ${\mcal Z_{n,k}}\subset B^{2n}\x B^{2k}$  defined as
$$
{\mcal Z_{n,k}}:=\set{(\mathbf{b}^+,\mathbf{b}^-,\mathbf{c}^+,\mathbf{c}^-) \st \Psi(\mathbf{b}^+,\mathbf{b}^-)=\Psi(\mathbf{c}^+,\mathbf{c}^-)}.
$$

To ellucidate the structure of $\mcal Z_{n,k}$, we will need the following two Lemmas:

\begin{lem}\label{lem:reductiontosinglecurve}
    If $(\mathbf{b}^+,\mathbf{b}^-,\mathbf{c}^+,\mathbf{c}^-) \in {\mcal Z_{n,k}}$, then there exist $m \in \N$, $\mathbf{d} \in B^{m}$ and a parametrized tropical curve $h:\Gamma \to B \x \R$ such that 
    \begin{align}\label{eq:extra_points_at_infinity}
        \begin{split}
            (ev_{-\infty}, ev_{\infty})(\Gamma) 
            = 
            (\mathbf{b}^- \cup \mathbf{c}^- \cup \mathbf{d}, \mathbf{b}^+  \cup  \mathbf{c}^+ \cup \mathbf{d})
        \in
        B^{n+k+m} \x B^{n+k+m}.
        \end{split}
    \end{align}
    Here the maps $ev_{\pm\infty}$ are those defined in \Cref{eq:evaluation_at_infinity}.
\end{lem}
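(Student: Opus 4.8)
The plan is to unwind the definition of $\mcal Z_{n,k}$ and turn a priori several tropical curves into a single one by concatenation. By definition of $\mcal Z_{n,k}$ we have $\Psi(\mathbf{b}^+,\mathbf{b}^-) = \Psi(\mathbf{c}^+,\mathbf{c}^-)$ in $\CH_0(B)$, which means
$$
\sum_i([b_i^+] - [b_i^-]) - \sum_j([c_j^+] - [c_j^-]) \in R_0(B).
$$
Since $R_0(B)$ is by definition the \emph{subgroup} generated by the images of the maps $\mcal M^{hor}_\Gamma(B \x \R) \to Z_0(B)$, the left-hand side is a finite $\Z$-linear combination of cycles each of which is realized by some horizontal-at-infinity tropical curve $h_\ell : \Gamma_\ell \to B \x \R$. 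So first I would write out this finite sum explicitly. Negating a curve (i.e. using $-1$ times a generator) is achieved by composing $h_\ell$ with the reflection $(b,t) \mapsto (b,-t)$ of the $\R$-factor, which swaps $ev_{-\infty}$ and $ev_{+\infty}$ and is again horizontal at infinity; and an integer multiple $m_\ell$ of a generator is realized by taking $m_\ell$ parallel disjoint copies (translated in the $\R$-direction, or if $B$ is compact, realized as a disconnected curve). After these two operations we may assume $V_+ - V_- = \sum_\ell V_+^{(\ell)} - V_-^{(\ell)}$ where each summand is the cycle of a single horizontal-at-infinity curve $h_\ell$.

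Next I would concatenate the curves $h_\ell$ into one. The disjoint union $h := \bigsqcup_\ell h_\ell : \Gamma := \bigsqcup_\ell \Gamma_\ell \to B \x \R$ is itself a parametrized tropical curve that is horizontal at infinity (the conditions in \Cref{def:parametrized_tropical_curve,def:horizontal_at_infinity} are local, hence preserved by disjoint unions), and $(ev_{-\infty},ev_{+\infty})(\Gamma)$ is the disjoint union, with multiplicity, of all the $V_\pm^{(\ell)}$. Now the key bookkeeping point: this gives, at $-\infty$, the multiset $\bigcup_\ell V_-^{(\ell)}$, and at $+\infty$, the multiset $\bigcup_\ell V_+^{(\ell)}$. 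We know that \emph{as elements of $Z_0(B)$}, i.e. after cancellation, $\bigcup_\ell V_+^{(\ell)} - \bigcup_\ell V_-^{(\ell)} = (\mathbf{b}^+ \cup \mathbf{c}^-) - (\mathbf{b}^- \cup \mathbf{c}^+)$ — here I should be careful with signs: $\Psi(\mathbf b^+,\mathbf b^-) - \Psi(\mathbf c^+,\mathbf c^-)$ has $\mathbf b^+, \mathbf c^-$ with positive sign and $\mathbf b^-,\mathbf c^+$ with negative sign. So at the level of multisets, $\bigcup_\ell V_+^{(\ell)}$ and $(\mathbf b^+ \cup \mathbf c^-) \cup \mathbf d$ agree for some common "excess" multiset $\mathbf d \in B^m$, and likewise $\bigcup_\ell V_-^{(\ell)} = (\mathbf b^- \cup \mathbf c^+) \cup \mathbf d$ with the \emph{same} $\mathbf d$ — since the difference of the two multisets is exactly the difference of the two reduced cycles, whatever fails to cancel on one side fails identically on the other. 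Finally I would reconcile this with the stated form \Cref{eq:extra_points_at_infinity}: applying once more the $\R$-reflection to the whole curve swaps the roles of $+\infty$ and $-\infty$, converting $(\mathbf b^+ \cup \mathbf c^-, \mathbf b^- \cup \mathbf c^+)$ into $(\mathbf b^- \cup \mathbf c^-, \mathbf b^+ \cup \mathbf c^+)$ up to reorganizing which block each $\mathbf c$ lands in (the labelling of edges asymptotic to $+\infty$ versus $-\infty$ is part of the combinatorial type, so this is just a relabelling), giving precisely $(\mathbf b^- \cup \mathbf c^- \cup \mathbf d, \mathbf b^+ \cup \mathbf c^+ \cup \mathbf d)$.

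The main obstacle is the sign/multiset bookkeeping rather than anything geometric: one must argue cleanly that the "uncancelled excess" multiset $\mathbf d$ is genuinely the same on both the $+\infty$ and $-\infty$ sides, which comes down to the elementary fact that if two multisets have a prescribed difference in the free abelian group $Z_0(B)$, then their symmetric difference is determined and the common part is free. A secondary subtlety is realizing negative coefficients and multiplicities by geometric operations on curves (reflection and parallel copies) while staying within the class of \emph{properly embedded} parametrized tropical curves of \Cref{def:parametrized_tropical_curve}; since $B$ is assumed compact here and the curves are disjoint in $B \x \R$ after generic translation in the $\R$-direction, properness and embeddedness are preserved, so this causes no real trouble.
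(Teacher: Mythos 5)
Your overall strategy is the same as the paper's: unwind the definition of $R_0(B)$ to produce finitely many horizontal-at-infinity curves realizing the relation, then combine them into a single parametrized curve. Your treatment of the group-theoretic bookkeeping (realizing negative coefficients by the $\R$-reflection, multiples by copies, and extracting the common excess multiset $\mathbf{d}$) is more explicit than the paper's, which simply asserts the existence of the $h_i$. However, there are two genuine gaps. First, the disjoint union $\bigsqcup_\ell h_\ell$ is in general \emph{not} an embedding, and your fix by generic translation in the $\R$-direction fails exactly where it is needed: two ends asymptotic to $+\infty$ over the same point $b \in B$ (which occur whenever entries of $\mathbf{b}^+,\mathbf{c}^\pm,\mathbf{d}$ coincide, and also for your ``$m_\ell$ parallel copies'') give rays $\set{b}\x[T_1,+\infty)$ and $\set{b}\x[T_2,+\infty)$ that overlap for every translation; for $\dim B = 1$ even the bounded parts cannot always be separated by a one-parameter family of translations. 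The paper's route avoids this entirely: the \emph{union of the images} $\cup_\ell h_\ell(\Gamma_\ell)$ is still a weighted balanced polyhedral complex (weights add along overlaps), and by the discussion in \Cref{sec:backgroundtropicalcurves} every such complex is the image of a unique parametrized tropical curve $h:\Gamma \to B \x \R$ for a possibly different abstract curve $\Gamma$; the evaluation maps, which record ends with multiplicity, are unchanged. You should replace the disjoint-union-of-domains step by this reparametrization of the image.

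Second, the final ``reflection plus relabelling'' step is not a valid operation. The reflection $(b,t)\mapsto(b,-t)$ swaps $ev_{-\infty}$ and $ev_{+\infty}$ \emph{wholesale}; it cannot move the $\mathbf{c}^-$-block from the $+\infty$ side to the $-\infty$ side while leaving the $\mathbf{b}^+$-block at $+\infty$, and which ends are asymptotic to $\pm\infty$ is determined by $h$, not by a choice of labels on the combinatorial type. The honest conclusion of your (correct) sign bookkeeping is
$(ev_{-\infty},ev_{+\infty})(\Gamma) = (\mathbf{b}^-\cup\mathbf{c}^+\cup\mathbf{d},\ \mathbf{b}^+\cup\mathbf{c}^-\cup\mathbf{d})$,
i.e.\ with $\mathbf{c}^+$ and $\mathbf{c}^-$ interchanged relative to \Cref{eq:extra_points_at_infinity}. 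This is what the definition of $\mcal Z_{n,k}$ actually delivers (a curve as literally displayed in the statement would instead realize $\Psi(\mathbf{b}^+,\mathbf{b}^-)+\Psi(\mathbf{c}^+,\mathbf{c}^-)=0$), and the discrepancy is harmless downstream since one may equivalently define $\mcal Z_{n,k}$ by $\Psi(\mathbf{b}^+,\mathbf{b}^-)=-\Psi(\mathbf{c}^+,\mathbf{c}^-)$ without affecting the surjectivity argument in \Cref{thm:mainthm}. But you should either record the conclusion with the correct placement of $\mathbf{c}^\pm$ or adjust the convention, rather than invent a geometric move that does not exist.
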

\begin{proof}
    Let $(\mathbf{b}^+,\mathbf{b}^-,\mathbf{c}^+,\mathbf{c}^-) \in {\mcal Z_{n,k}}$.
    This means there exist tropical curves $h_i : \Gamma_i \to B \x \R$ which are horizontal at infinity and such that
    \begin{equation*}
        \prod_i (ev_{-\infty}, ev_{\infty})(\Gamma_{i}) 
        = 
        (\mathbf{b}^- \cup \mathbf{c}^- \cup \mathbf{d}, \mathbf{b}^+  \cup  \mathbf{c}^+ \cup \mathbf{d})
        \subset
        B^{n+k+m} \x B^{n+k+m}.
    \end{equation*}
    for some $\mathbf{d}\in B^{m}$.
    Indeed, note that $(\mathbf{b}^+,\mathbf{b}^-,\mathbf{c}^+,\mathbf{c}^-) \in {\mcal Z_{n,k}}$ means that $\sum_i b^+_i + \sum_i c^+_i \sim  \sum_i b^-_i + \sum_i c^-_i$ in $\CH_0(B)$, and relations in $\CH_0(B)$ are, by definition, generated tropical curves that are horizontal at infinity.
    Note that the union $\cup_i h_i(\Gamma_i)$ is a tropical curve in $B \x \R$.
    We argued in \Cref{sec:backgroundtropicalcurves} that every tropical curve admits a parametrization: hence there exists a parametrized tropical curve $h: \Gamma \to B \x \R$ with $h(\Gamma) = \cup_i h_i(\Gamma_i)$.
\end{proof}

\begin{lem}\label{lem:reduction_to_curves}
    Let $\Gamma$ be an abstract tropical curve. 
    Then the image of the evaluation map  
    $$
        (ev_{-\infty}, ev_{\infty}): \mcal M^{hor}_\Gamma(B \x \R) \to B^{d} \x B^{d}
    $$ 
    is a countable union of open subsets of affine subspaces.
    \end{lem}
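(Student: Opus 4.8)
The plan is to exploit the embedding of $\mcal M_\Gamma^{hor}(B\x\R)$ as a tropical affine submanifold of a product of copies of $B$ and of lattices-times-intervals, established in \Cref{lem:modulispaceisaffine} and \Cref{cor:modulispaceisaffine}, and then observe that the evaluation map $(ev_{-\infty},ev_{\infty})$ is, in these coordinates, an integral affine projection onto a subset of the $B^V$-factors. First I would fix a topological type $\Gamma$ together with an orientation of its edges, so that \Cref{eq:embeddingmodulispace} realises $\mcal M_\Gamma^{hor}(B\x\R)$ inside $(B\x\R)^V \x (T_\Z(B\x\R)\x(0,+\infty])^E$. By \Cref{lem:modulispaceisaffine}, locally this is cut out by the two conditions in that proof: the colinearity condition $q_e^+ = \gamma_{f_e,q_e^-}(l_e/w_e)$ and the balancing condition $\sum_{e\to v}\pm w_e f_e = 0$. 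The horizontality constraint of \Cref{def:horizontal_at_infinity} further forces the $B$-component of $f_e$ to vanish for $e\in E_\infty$, which is again an integral affine condition. Hence $\mcal M_\Gamma^{hor}(B\x\R)$ is, in each affine chart, an open subset of an integral affine subspace of the ambient Euclidean space.

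Next I would analyse the map itself. The evaluation map $ev_{\pm\infty}$ of \Cref{eq:evaluation_at_infinity} sends $h$ to the tuple $((pr_B\circ h(e))^{w_e})_{e\in E_{\pm\infty}}$; since $h$ is horizontal at infinity, for a semi-infinite edge $e$ the value $pr_B\circ h(e)$ is just the $B$-component of $h(v)$ at the finite vertex $v = v^\pm(e)$ of $e$. Therefore, in the affine coordinates $(\mathbf q,\mathbf f,\mathbf l)$ on the ambient space, $(ev_{-\infty},ev_{\infty})$ is simply the composition of the coordinate projection onto those $B^V$-slots indexed by the finite endpoints of semi-infinite edges (repeated $w_e$ times), followed by no further manipulation. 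A coordinate projection is an integral affine map, and the image of an open subset of an affine subspace of $\R^N$ under a linear projection to $\R^M$ is an open subset of the image affine subspace; hence, working chart by chart, the image of each chart of $\mcal M_\Gamma^{hor}(B\x\R)$ is an open subset of an affine subspace of $B^d\x B^d$ (here $d$ counts the semi-infinite edges on each side with multiplicity, equal on both sides by the balancing remark $\sum_{e\in E_{\pm\infty}}\pm w_e = 0$). Note these affine subspaces may vary from chart to chart, but each is a genuine affine subspace in the local integral affine coordinates on $B^d\x B^d$.

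Finally I would pass from "chart by chart" to "countable union." The manifold $\mcal M_\Gamma^{hor}(B\x\R)$ is finite-dimensional and second-countable, hence admits a countable atlas $\{U_i\}$ of charts of the above form; its image under $(ev_{-\infty},ev_{\infty})$ is then $\bigcup_i (ev_{-\infty},ev_{\infty})(U_i)$, a countable union of open subsets of affine subspaces, as claimed. The one point requiring a little care—and the main obstacle I anticipate—is making sure the affine subspaces are the right kind: one must check that the colinearity conditions involving the integral rays $\gamma_{f_e,q_e^-}$ really do cut out an \emph{integral affine}, rather than merely semialgebraic, locus, and that the projection of such a locus is again of the required form with $B$ (which is only locally $\R^d$) as target. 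This is handled exactly as in \Cref{lem:modulispaceisaffine}: once one passes to a chart small enough that all the relevant edge-images lie in single affine charts of $B$, the conditions become genuinely affine-linear in the coordinates, and the projection to the relevant $B$-coordinates is the restriction of a linear projection. Second-countability of $\mcal M_\Gamma^{hor}(B\x\R)$, which underlies the reduction to countably many charts, follows since it is a submanifold of a second-countable manifold.
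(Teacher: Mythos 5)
Your proposal is correct and follows essentially the same route as the paper: embed $\mcal M_\Gamma^{hor}(B\x\R)$ via \Cref{eq:embeddingmodulispace} as (an open subset of) an integral affine subspace, identify $(ev_{-\infty},ev_{\infty})$ with an affine coordinate projection onto the $B$-factors attached to the finite endpoints of the semi-infinite edges, and conclude that the image is affine. Your extra care about horizontality being an integral affine condition and about countability via a countable atlas of charts only makes explicit points the paper relegates to a footnote or leaves implicit.
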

    \begin{proof}
        Recall from \Cref{cor:modulispaceisaffine} that $\mcal M^{hor}_\Gamma(B\x\R) \subset \mcal M^{hor,imm}_\Gamma(B\x\R)$ can be realized as (an open subset of) an integral affine submanifold of $(B\x \R)^V \x (T_\Z(B\x \R)\x(0,+\infty])^E$ via the map in \Cref{eq:embeddingmodulispace}.\footnote{Strictly speaking we have only proved this for $\mcal M_\Gamma(B\x\R) \subset \mcal M_\Gamma^{imm}(B \x \R)$. However, note that $\mcal M^{hor}_\Gamma(B\x\R) \subset \mcal M_\Gamma(B\x\R)$ is the intersection of $\mcal M_\Gamma(B\x\R)$ with the submanifold of $(B\x \R)^V \x (T_\Z(B\x\R) \x (0,+\infty])^E$ whose projection to $T_\Z(B\x \R)^{E_\infty}$ lies in $T_\Z B \subset T_\Z(B\x \R)$. This shows that $\mcal M^{hor}_\Gamma(B\x\R)$ is still an integral affine submanifold of $(B\x \R)^V \x (T_\Z(B\x\R) \x (0,+\infty])^E$.}
        This map gives $\mcal M^{hor}_\Gamma(B\x\R)$ its tropical affine structure, and the map $(ev_{-\infty}, ev_{\infty}): \mcal M^{hor}_\Gamma(B \x \R) \to B^{d} \x B^{d}$ is identified with the projection 
        $$
        (B\x\R)^{V}\x (T_\Z(B\x \R)\x (0,+\infty])^{E} \to (B\x\R)^{V_{\infty}},
        $$
        where  $V_{\infty}$ is the set of vertices from which the unbounded edges emanate.
        Since projecting is an affine map, and $\mcal M_\Gamma^{hor}(B\x\R)$ is an affine subspace, the result follows.
    \end{proof}

We can now show:
\begin{prop}\label{prop:Ziscountableunion}
    The set ${\mcal Z_{n,k}}$ is a countable union of open subsets of affine subspaces of $B^{2n} \x B^{2k}$.
\end{prop}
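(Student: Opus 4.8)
The plan is to present $\mcal Z_{n,k}$ as an explicit countable union built from the evaluation maps $(ev_{-\infty},ev_{+\infty})$ of \Cref{eq:evaluation_at_infinity} and then to invoke \Cref{lem:reduction_to_curves}, using throughout that the class of subsets which are countable unions of open subsets of affine subspaces is a local notion and is stable under preimages by affine maps, images by coordinate projections, and countable unions (all of which reduce, in a countable affine atlas, to elementary facts about affine subspaces of $\R^N$).

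First I would use \Cref{lem:reductiontosinglecurve} to see that $(\mathbf b^+,\mathbf b^-,\mathbf c^+,\mathbf c^-)\in\mcal Z_{n,k}$ if and only if there exist a topological type $\Gamma$, an integer $m\ge 0$, a tuple $\mathbf d\in B^m$, and a curve $h\in\mcal M^{hor}_\Gamma(B\x\R)$ with
$$
(ev_{-\infty},ev_{+\infty})(h)=\big(\mathbf b^-\cup\mathbf c^-\cup\mathbf d,\ \mathbf b^+\cup\mathbf c^+\cup\mathbf d\big)\in B^d\x B^d,\qquad d:=n+k+m;
$$
the forward implication is exactly \Cref{lem:reductiontosinglecurve}, while the backward one is immediate from the construction preceding \Cref{def:tropical_linear_equivalence}, since such an $h$ is horizontal at infinity and hence contributes to $R_0(B)$ the relation making $\Psi(\mathbf b^+,\mathbf b^-)=\Psi(\mathbf c^+,\mathbf c^-)$. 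Next, for fixed $\Gamma$ and $m$ and for each ``labelling'' $\sigma$ --- a choice of which of the $d$ coordinate slots of $ev_{-\infty}$ (resp.\ $ev_{+\infty}$) carry the entries of $\mathbf b^-,\mathbf c^-,\mathbf d$ (resp.\ $\mathbf b^+,\mathbf c^+,\mathbf d$), of which there are only finitely many --- I would introduce the map
$$
\iota_\sigma:B^{2n}\x B^{2k}\x B^m\longrightarrow B^d\x B^d,\qquad(\mathbf b^+,\mathbf b^-,\mathbf c^+,\mathbf c^-,\mathbf d)\longmapsto\big((\mathbf b^-,\mathbf c^-,\mathbf d)_\sigma,(\mathbf b^+,\mathbf c^+,\mathbf d)_\sigma\big),
$$
assembled from coordinate permutations and the diagonal inclusion $B^m\into B^m\x B^m$, hence integral affine in product charts (the diagonal of a tropical affine manifold being locally cut out by affine equations). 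The previous paragraph then rephrases as
$$
\mcal Z_{n,k}=\bigcup_\Gamma\ \bigcup_{m\ge 0}\ \bigcup_\sigma\ pr\left(\iota_\sigma^{-1}\Big((ev_{-\infty},ev_{+\infty})\big(\mcal M^{hor}_\Gamma(B\x\R)\big)\Big)\right),
$$
where $pr:B^{2n}\x B^{2k}\x B^m\to B^{2n}\x B^{2k}$ forgets the $B^m$-factor.

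Finally I would verify that the right-hand side is of the required form. By \Cref{lem:reduction_to_curves} each image $(ev_{-\infty},ev_{+\infty})(\mcal M^{hor}_\Gamma(B\x\R))$ is a countable union of open subsets of affine subspaces; its preimage under the affine map $\iota_\sigma$ is again of this form (the preimage of an affine subspace under an affine map is an affine subspace or empty, and the preimage of an open set under a continuous map is open), and so is the image under the coordinate projection $pr$ (a coordinate projection carries an affine subspace onto an affine subspace, and, as surjective linear maps between finite-dimensional spaces are open, carries an open subset of it onto an open subset of the image). Since there are countably many topological types $\Gamma$ (finite graphs with semi-infinite edges and no $1$-valent vertices), countably many values of $m$, and finitely many labellings $\sigma$ for each $(\Gamma,m)$, the displayed union is countable, which gives the claim. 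The one genuinely non-formal point --- and the step I would be most careful about --- is the bookkeeping of the auxiliary points $\mathbf d$: the diagonal factor of $\iota_\sigma$ is exactly what encodes that the same tuple $\mathbf d$ occurs among the $-\infty$ and the $+\infty$ ends of $h$, and dropping it would only yield a superset of $\mcal Z_{n,k}$.
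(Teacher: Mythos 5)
Your proposal is correct and follows essentially the same route as the paper: both decompose $\mcal Z_{n,k}$ by topological type and by the number $m$ of auxiliary points, invoke \Cref{lem:reductiontosinglecurve} for the covering and \Cref{lem:reduction_to_curves} for the affineness of the evaluation images, and then observe that the remaining maps are affine. The only (cosmetic) difference is where the matching of the auxiliary tuple $\mathbf d$ at the two ends is imposed: the paper cuts out a sub-moduli-space $\mcal M^{hor}_\Gamma(B\x\R)^{\mcal Z_{n,k}}_m$ by integral linear conditions upstairs, whereas you encode the same condition downstairs via the preimage under the affine map $\iota_\sigma$ containing a diagonal factor.
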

\begin{proof}
    For each $m \in \N$, we consider the space 
    $$
        \mcal M^{hor}(B\x\R)_m = \bigcup_{|E(\Gamma)_\infty| = 2n+2k+2m} \mcal M^{hor}_\Gamma(B \x \R),
    $$
    where the union ranges over all topological types with $2n+2k+2m$ semi-infinite edges.
    Let 
    $$
    \mcal M^{hor}(B\x\R)_m^{\mcal Z_{n,k}} \subset \mcal M^{hor}(B\x\R)_m
    $$
    be the subset of curves $\Gamma \in \mcal M^{hor}(B\x\R)_m$ that satisfy \Cref{eq:extra_points_at_infinity} (that is, they have $n + k + m$ ends at each $\pm \infty$, and $m$ of the asymptotic points coincide on both ends).
    Note that $\mcal M^{hor}(B\x\R)_m^{\mcal Z_{n,k}}$ admit maps $\mcal M^{hor}(B\x\R)^{\mcal Z_{n,k}}_m \to {\mcal Z_{n,k}}$ by composing
    \begin{equation}\label{eq:longmap}
        \mcal M^{hor}(B\x\R)^{\mcal Z_{n,k}}_m \xra{(ev_{-\infty}, ev_{\infty})} B^{n+k+m} \x B^{n+k+m} \to \mcal Z_{n,k} \subset B^{2n} \x B^{2k},
    \end{equation} 
    where the second map is the product of the projections $B^{n+m+k} \to B^{n+m}$ (with a final 
    reordering of factors $B^{n+m} \times B^{n+m} \to B^{2n} \to B^{2m}$).
    Furthermore, \Cref{lem:reductiontosinglecurve} shows that the union of these maps 
    $$
    \bigcup_m  \mcal M^{hor}(B\x\R)^{\mcal Z_{n,k}}_m \to \mcal Z_{n,k} \subset B^{2n} \x B^{2k}.
    $$
    surjects onto $\mcal Z_{n,k}$.
    Note that this is a \emph{countable} union, hence it is enough to show that the images of the maps $\mcal M^{hor}(B\x\R)^{\mcal Z_{n,k}}_m \to B^{2n} \x B^{2k}$ satisfy the statement of the Proposition.
    Since the second map in \Cref{eq:longmap} is a projection, which is affine, we can further reduce to showing that the image of the evaluation maps
    $$
        (ev_{-\infty}, ev_{\infty}): \mcal M^{hor}(B \x \R)_m^{\mcal Z_{n,k}} \to B^{n+m+k} \x B^{n+m+k}
    $$
    satisfy the statement of the Proposition.

    First note that this is true for the extended map 
    $$
    (ev_{-\infty},ev_\infty):\mcal M^{hor}(B \x \R)_m \to B^{n+m+k} \x B^{n+m+k}
    $$
    (note we now consider $\mcal M^{hor}(B \x \R)_m$ instead of $\mcal M^{hor}(B \x \R)_m^{\mcal Z_{n,k}}$ as the domain). 
    Indeed, the moduli-space $\mcal M^{hor}(B\x \R)_m$ and the above map are defined as a \emph{countable} union, and each piece 
    $$
    \mcal M_\Gamma^{hor}(B \x \R)  \to B^{n+m+k} \x B^{n+m+k}
    $$
    satisfies the statement of the Proposition by \Cref{lem:reduction_to_curves}.

    To conclude, note that under the embedding 
    $$
    \mcal M^{hor}_\Gamma(B \x \R)_m \into (B\x\R)^{V}\x (T_\Z(B\x \R)\x (0,+\infty])^{E}
    $$ 
    the subspace $\mcal M^{hor}_\Gamma(B \x \R)_m^{\mcal Z_{n,k}}$ is identified with those tuples in the image whose projections to the last $m$ factors in $B^{V_{+\infty}}$ and $B^{V_{-\infty}}$ agree (possibly up to permutation).
    These are integral linear conditions and hence exhibit $\mcal M^{hor}_\Gamma(B \x \R)_m^{\mcal Z_{n,k}} \subset \mcal M^{hor}_\Gamma(B \x \R)_m$ as a finite union of integral affine subspaces, from which the result follows.

\end{proof}

\begin{rmk}\label{rmk:Zivsimageevaluation}
Denote by $\mcal Z_{n,k} = \cup_i Z_i$ the decomposition given by \Cref{prop:Ziscountableunion}. 
Note that each $Z_i$ is given as (a linear subspace of) the  image of  evaluation maps $(ev_{-\infty},ev_{+\infty}):\mcal M_\Gamma^{hor}(B\x\R) \to B^{n+m+k} \x B^{n+m+k}$.
Therefore, in what follows we will reduce statements about the $Z_i$ to the images of evaluation maps.
\end{rmk}

We now show that the (open subsets of) affine subspaces $Z_i$ are isotropic for a suitable family of  $p$-forms on $B^{2n} \x B^{2k}$.
To simplify the notation, we will write $p_i : B^{2n} \x B^{2k} \to B$ for the composition of the permutation 
$$
(1,\dots,2n+2k) \mapsto (1,\dots,n,2n+1,\dots,2n+k,n+1,\dots,2n,2n+k+1,\dots,2n+2k)
$$
with the projection onto the $i$-th factor.

\begin{prop}\label{prop:Zisotropic}
Let $0\neq \tilde\omega \in H^0(\wedge^p T^*_\Z  B)$ be a non-zero tropical $p$-form.
Then the $p$-form
\begin{equation}\label{eq:omega}
\omega=\sum_{i=1}^{n+k}p_i^*\tilde\omega - \sum_{i=n+k +1}^{2n+2k}p_i^*\tilde \omega \in H^0(\wedge^pT^*_\Z(B^{2n}\x B^{2k}))
\end{equation}
vanishes on the $Z_i$.
\end{prop}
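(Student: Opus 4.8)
The plan is to reduce the statement to a single identity about a tropical curve and its deformations, and then to prove that identity. Since $\omega$ is locally constant and, by \Cref{prop:Ziscountableunion}, each $Z_i$ is an open subset of an affine subspace, it suffices to check that $\omega$ annihilates $\wedge^p T_x Z_i$ at one point $x\in Z_i$. By \Cref{rmk:Zivsimageevaluation} the subspace $Z_i$ lies in the image of an evaluation map $(ev_{-\infty},ev_{+\infty})\colon \mcal M^{hor}_\Gamma(B\x\R)\to B^{n+m+k}\x B^{n+m+k}$ followed by the projection forgetting the auxiliary factors, and since all these maps are affine (\Cref{cor:modulispaceisaffine}) the space $T_x Z_i$ is spanned by images of differentials; by \Cref{lem:deformationsaretangentspace} a tangent vector at $h$ is a deformation $\xi\in\deff(\Gamma)$. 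Along a semi-infinite edge $e$ the sheaf $\mcal D$ is $T(B\x\R)/Te$, which $pr_B$ identifies with $TB$ because $e$ is vertical, so $\xi$ has a well-defined value $(\xi)_e\in T_{pr_B h(e)}B$. A short computation then identifies the pullback of $\omega$ with the map sending $\xi_1\wedge\cdots\wedge\xi_p$ to
\[
\sum_{e\in E_{+\infty}} w_e\,\tilde\omega\big((\xi_1)_e,\dots,(\xi_p)_e\big)\;-\;\sum_{e\in E_{-\infty}} w_e\,\tilde\omega\big((\xi_1)_e,\dots,(\xi_p)_e\big);
\]
the auxiliary points $\mathbf d$ occur in both $E_{+\infty}$ and $E_{-\infty}$ with matching values — this is exactly the condition carving out the sublocus of $\mcal M^{hor}_\Gamma$ inside which $Z_i$ sits — so they may be included in both sums.

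Everything thus comes down to showing this expression vanishes for any $h$ and any deformations $\xi_1,\dots,\xi_p$. I would lift each $\xi_j$ to an honest vector field $\hat\xi_j$ along $h(\Gamma)\subset B\x\R$ that is affine on every edge (a deformation is an infinitesimal motion of the vertices with the edge directions held fixed, so such a lift exists), and form the function $\Psi:=h^*\big((pr_B^*\tilde\omega)(\hat\xi_1,\dots,\hat\xi_p)\big)$ on $\bar\Gamma$. Two structural facts drive the argument: on each bounded edge $\Psi$ is \emph{affine}, since in its expansion every term picking the fixed edge direction out of two or more of the factors vanishes; and on each semi-infinite edge $\Psi$ is \emph{constant}, because there each $\hat\xi_j$ is constant (the finite vertex moves, the vertical direction is held fixed). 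Hence $\Psi$ extends continuously to $\bar\Gamma$, its value at the point at infinity of $e$ equals its value at the finite endpoint $v_e$, and the quantity to kill is precisely $\sum_{e\in E_\infty}\epsilon_e w_e\,\Psi(v_e)$, where $\epsilon_e=\pm1$ records whether $e$ runs to $\pm\infty$.

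To finish I would bring in the balancing condition. The closed $1$-form $\theta:=h^*(dt_{\R})$ is integral on each edge and satisfies the kernel condition of \Cref{def:cotangent_sheaf} at every vertex (this is balancing in the $\R$-direction), so $\theta\in H^0(\mcal T^*)$; under the isomorphism $H^0(\mcal T^*)\cong H_1(\bar\Gamma,\partial\Gamma;\R)$ of \Cref{lem:1formsandhomology} and \Cref{rmk:explicitiso} it corresponds to a relative $1$-cycle whose boundary in $H_0(\partial\Gamma;\R)$ is $\sum_{e\in E_\infty}\epsilon_e w_e[x_e]$. Thus $\sum_{e\in E_\infty}\epsilon_e w_e\,\Psi(v_e)$ equals the pairing $\langle\Psi,\partial[\theta]\rangle$, equivalently, after summation by parts, $\sum_{e\ \mathrm{bounded}} w_e\,(u_e)^t\,\big(\Psi(v^+(e))-\Psi(v^-(e))\big)=\langle h^*dt_{\R},\,d\Psi\rangle_{L^2(\bar\Gamma)}$ (here $u_e$ is the primitive lattice direction of $h(e)$ and $(u_e)^t$ its $\R$-component), in which $h^*dt_{\R}$ is both closed and, again by balancing, coclosed. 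The main obstacle is this last vanishing: purely homological or Hodge-theoretic manipulations turn out to be circular, since $h^*dt_{\R}$ is not compactly supported on the non-compact $\bar\Gamma$ and its contribution at infinity merely reproduces the sum one started from. I would therefore feed in the remaining structure — that the increments $\Psi(v^+(e))-\Psi(v^-(e))$ are not arbitrary but equal $\sum_j \tilde\omega\big(\dots,(\xi_j)_{v^+(e)}-(\xi_j)_{v^-(e)},\dots\big)$, with each difference constrained to lie along the ambient line spanned by $e$ — and run an induction on the combinatorics of $\Gamma$ (on the number of bounded edges, say, or on $b_1(\Gamma)$), contracting or deleting a bounded edge and reducing to the base case of a curve with a single vertex and only vertical semi-infinite edges, where the identity is immediate from $\sum_{e\in E_{+\infty}} w_e=\sum_{e\in E_{-\infty}} w_e$.
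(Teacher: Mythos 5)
Your reduction is sound and runs parallel to the paper's up to the point where everything hinges on the single identity $\sum_{e\in E_\infty}\epsilon_e w_e\,\tilde\omega\big((\xi_1)_e,\dots,(\xi_p)_e\big)=0$: the identification of the pullback of $\omega$ with that sum, the affineness of $\Psi$ on bounded edges, and its constancy on unbounded ones are all correct. But the proof of that identity is missing. As you yourself observe, pairing $\Psi$ against $\partial[h^*dt]$ is circular --- regrouping $\sum_{e\ \mathrm{bdd}}w_e(u_e)^t\big(\Psi(v^+(e))-\Psi(v^-(e))\big)$ by vertices and applying the $t$-component of balancing simply reproduces the sum you started from --- and the induction on the combinatorics of $\Gamma$ that is supposed to close the gap is not carried out. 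Nor is it clear that it can be: deleting or contracting a bounded edge of a balanced tropical curve does not in general produce a balanced tropical curve, and no inductive hypothesis is formulated that would survive such a move. Only the base case is checked, so the central step of the proof is absent.

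The structural reason your decomposition stalls is that you separate $\tilde\omega$ (packaged into the scalar function $\Psi$) from $dt$ (packaged into the relative cycle $h^*dt$) \emph{before} invoking balancing. The paper wedges first and contracts second: \Cref{lem:deformationsand1forms}, applied to the $(p+1)$-form $\tilde\omega\wdg dt$, shows that $e\mapsto w_e\,(\tilde\omega\wdg dt)(D_1\wdg\cdots\wdg D_p\wdg u_e)$ is a \emph{global} section of $\mcal T^*$, i.e.\ its vertex-local sums vanish by balancing; and this quantity is constant along each bounded edge, because the $D_j$ are constant in $T(B\x\R)/Te$ and altering them by multiples of $u_e$ does not change the contraction. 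Summing the vertex identities over all of $V$ therefore telescopes (equivalently, in the paper's packaging, $\iota_*\circ\partial=0$ for the associated relative $1$-cycle): bounded edges cancel in pairs and only the terms at infinity survive, which is exactly $\pm\omega(v_1\wdg\cdots\wdg v_p)$. In your language, the expansion $(\tilde\omega\wdg dt)(D_1\wdg\cdots\wdg D_p\wdg u_e)=\pm\Psi\cdot dt(u_e)+\big(\text{cross terms }{\pm}\,dt(D_j)\,\tilde\omega(\dots,\bar u_e)\big)$ has the property that each summand varies along a bounded edge while the total does not; keeping only the first summand is precisely what produces your uncancelled remainder $\sum_{e\ \mathrm{bdd}}w_e(u_e)^t\big(\Psi(v^+)-\Psi(v^-)\big)$. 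To repair the argument you would need to reinstate the cross terms, at which point you have reconstructed the paper's proof.
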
 
\begin{proof}
    By \Cref{rmk:Zivsimageevaluation} it is enough to show that $\omega$ vanishes on the image of the evaluation maps $(ev_{-\infty},ev_{+\infty}):\mcal M_\Gamma^{hor}(B\x\R) \to B^{2n} \x B^{2k}$.
    Let then $h:\Gamma \to B\x \R$ be a parametrized tropical curve that is horizontal at infinity.
    Let $\eta: H^0(\mcal T^*) \xra{\sim} H_1(\bar\Gamma,\partial\Gamma;\R)$ be the isomorphism from \Cref{lem:1formsandhomology}.
    Denote by $\mcal T^*_\infty \Gamma$ the direct sum of the stalks of $\mcal T^*$ at the unbounded edges of $\Gamma$.
    Then we have a commutative square
    $$
    \begin{tikzcd}
    H^0(\mcal T^*)  \arrow[r,"\sim","\eta"'] \arrow[d,"(-)|_{E_\infty}"] & H_1(\bar\Gamma,\partial \Gamma;\R) \arrow[d,"\partial"]\\
    \mcal T^*_\infty \Gamma \arrow[r,"\sim"] &H_0(\partial \Gamma;\R).
    \end{tikzcd}
    $$

    Recall from \Cref{sec:deformations} that the isomorphism $\eta:H^0(\mcal T^*) \xra{\sim} H_1(\bar{\Gamma},\partial\Gamma;\R)$ depends on a choice of orientation for the edges.
    For the purposes of this proof, we choose an orientation such that the unbounded edges---which are horizontal at infinity by \Cref{def:horizontal_at_infinity}---agree with $\partial_t$, where $t$ is the coordinate of $\R$.
    Then in the square above the map $\eta$ sends 
    $$
    (\alpha_e)\in \oplus_e T^*e \mapsto \sum_e \alpha_e(u_e)e \in H_1(\bar\Gamma,\partial\Gamma;\R)
    $$
    (see \Cref{rmk:explicitiso}); the bottom horizontal map is the restriction  
    $$
    (\alpha_e)_{e\in E_\infty}\mapsto (\pm\alpha_e(\partial_t))_{e\in E_{\pm\infty}};
    $$
    the left vertical map is restriction to the stalks; and the right vertical map is the boundary operator of the long exact sequence of the pair $(\bar\Gamma,\partial\Gamma)$, which sends a simplicial $1$-chain to its boundary.
    As before we denote by $\mcal D_\infty\Gamma \simeq \oplus_{e\in E_\infty}\mcal D_e$ the direct sum of the stalks of $\mcal D$ at the unbounded edges of $\Gamma$; note that $\mcal D_\infty\Gamma \simeq TB^{2n + 2k}, 2n + 2k = |E_\infty|$.
    Then using \Cref{lem:deformationsand1forms} we can extend the above square to a commutative diagram
    \begin{equation}\label{eq:commutativediagram}
        \begin{tikzcd}
            \wedge^{p}H^0(\mcal D) \arrow[r,"\Phi_{\tilde\omega\wdg dt}"] \arrow[d,"(-)|_{E_\infty}"'] & H^0(\mcal T^*)  \arrow[r,"\sim","\eta"'] \arrow[d,"(-)|_{E_\infty}"] & H_1(\bar{\Gamma},\partial \Gamma;\R) \arrow[d,"\partial"]\\
            \wedge^p \mcal D_\infty \Gamma   \arrow[r,"\Phi_{\tilde\omega\wdg dt}"] &
            \mcal T^*_\infty \Gamma \arrow[r,"\sim"] \arrow[dr,"ev"'] &H_0(\partial \Gamma;\R)\arrow[d,"\iota_*"]\\
            & & H_0(\bar{\Gamma};\R)\cong \R
            \end{tikzcd}
    \end{equation}
    where $ev = ev_{(1,\dots,1,-1,\dots,-1)}$ maps $(\alpha_e)_{e\in E_\infty}$ to $\sum_{e\in E_{\pm\infty}} \pm\alpha_e(\partial_t)$.
    Furthermore, for $v_i = (v_i^1,\dots,v_i^{2n+2k}) \in T(B^{2n} \x B^{2k}), i = 1,\dots,p$, we have
    \begin{align*}
    \omega(v_1\wdg\dots\wdg v_p) 
    &=
    \sum_{i=1}^{2n}\tilde\omega(v^i_1\wdg\dots\wdg v_p^i) - \sum_{i=2n+1}^{2n+2k}\tilde\omega (v^i_1\wdg\dots\wdg v_p^i) \\
    &=
    \sum_{i=1}^{2n}\Phi_{\tilde\omega \wdg dt}(v^i_1\wdg\dots\wdg v_p^i)(\partial_t) + \sum_{i=2n+1}^{2n+2k}\Phi_{\tilde \omega \wdg dt}(v^i_1\wdg\dots\wdg v_p^i)(-\partial_t)\\
    & = (ev\circ \Phi_{\tilde\omega\wdg dt})(v_1\wdg\dots\wdg v_p)
    \end{align*}
    where in the second equality we have used the identification $TB \cong \mcal D_e$ for any unbounded edge $e \in E_\infty$.
    The result now follows: if $v_1,\dots,v_p$ are tangent vectors to the image of an evaluation map 
    $$
    (ev_{-\infty},ev_\infty): \mcal M_\Gamma^{hor}(B\x\R) \to B^{n+m+k} \x B^{n+m+k},
    $$ 
    then  there exist deformations $D_i \in H^0(\mcal D)$ such that  $v_i= D_i|_{E_\infty}$, and
    \begin{align*}
    \omega(v_1\wdg\dots\wdg v_p)&=(ev \circ \Phi_{\omega\wdg dt})(D_1|_{E_\infty}\wdg \dots\wdg D_p|_{E_\infty})\\
    &=(ev \circ \Phi_{\omega\wdg dt})(D_1\wdg\dots \wdg D_p)|_{E_\infty}\\
    &= (\iota_*\circ\partial \circ \eta \circ \Phi_{\omega\wdg dt}) (D_1\wdg\dots \wdg D_p)\\
    &=0.
    \end{align*}
    Here we have used the commutativity of the top left square of \Cref{eq:commutativediagram} in the second line, the commutativity of both the top right square and the bottom right triangle in the third line, and the fact that  $\iota_*\circ\partial=0$ in the last line.
\end{proof}

Recall from \Cref{def:infinitedimensional} that we say $\CH_0(B)_{hom}$ is infinite-dimensional if the maps
    \begin{align}\label{eq:projection_to_CH0hom}
        \begin{split}
            \Psi:B^n\x B^n &\to \CH_0(B)_{\hom}\\ 
            (\mathbf{b}^+,\mathbf{b}^-) &\mapsto \sum_{i=1}^n \left( b_i^+ - b_i^-\right)
        \end{split}
     \end{align}
are not surjective for any $n$.
\Cref{prop:Zisotropic} together with the following Lemma will allow us to conclude that the existence of non-zero tropical $p$-forms, $p\geq 2$, implies infinite dimensionality of $\CH_0(B)_{\hom}$.

\begin{lem}\label{lem:roitmanbound}(\cite[Lemma 9]{roitman1971gamma})
    Let $V = \oplus_{j=1}^m V_j$ be a graded vector space and $0 \neq \omega_j \in \wedge^p V_j^*$ non-zero $p$-forms on $V_j$, $p \geq 2$.
    Denote by $pr_j: V \to V_j$  the natural projection.
    If $W \subset V$ is an isotropic subspace for the $p$-form 
    $$
    \Omega = \sum_j pr^*_j \omega_j,
    $$
    then $\dim W \leq \dim V - m$.
\end{lem}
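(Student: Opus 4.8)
The plan is to induct on the number of summands $m$. The base case $m=1$ is the classical fact that an isotropic subspace for a nonzero $2$-form has codimension at least one (and for a $p$-form with $p \geq 2$ the same bound holds, since if $W = V_1$ then $\omega_1$ would have to vanish identically, contradicting $\omega_1 \neq 0$); more precisely, if $\omega_1 \neq 0$ there is some decomposable $p$-vector $u_1 \wedge \cdots \wedge u_p$ on which $\omega_1$ does not vanish, so $W$ cannot contain all of $u_1, \dots, u_p$, and hence $\dim W \le \dim V_1 - 1 = \dim V - 1$.

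For the inductive step, suppose the statement holds for graded vector spaces with fewer than $m$ summands, and let $W \subset V = \bigoplus_{j=1}^m V_j$ be isotropic for $\Omega = \sum_j pr_j^* \omega_j$. The idea is to cut down to the last summand: consider the projection $pr_m : V \to V_m$ and look at $W' := W \cap \ker(pr_m) = W \cap \bigl(\bigoplus_{j=1}^{m-1} V_j\bigr)$. This subspace $W'$ is isotropic for the restriction of $\Omega$ to $\bigoplus_{j=1}^{m-1} V_j$, which is exactly $\sum_{j=1}^{m-1} pr_j^* \omega_j$ (the pullback of $\omega_m$ vanishes on $\ker(pr_m)$). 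By the inductive hypothesis, $\dim W' \le \dim\bigl(\bigoplus_{j=1}^{m-1} V_j\bigr) - (m-1)$. On the other hand, $\dim W \le \dim W' + \dim pr_m(W)$, so it suffices to show $\dim pr_m(W) \le \dim V_m - 1$, i.e. that $pr_m(W)$ is a proper subspace of $V_m$.

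The main obstacle is precisely this last claim: why can the projection $pr_m(W)$ not be all of $V_m$? Here is where I would use that $\omega_m$ is a $p$-form with $p \ge 2$, not merely $p = 1$. Suppose for contradiction $pr_m(W) = V_m$. Pick $u_1, \dots, u_p \in V_m$ with $\omega_m(u_1 \wedge \cdots \wedge u_p) \neq 0$; lift them to $w_1, \dots, w_p \in W$ with $pr_m(w_i) = u_i$. Since $W$ is isotropic, $\Omega(w_1 \wedge \cdots \wedge w_p) = 0$, which reads $\sum_{j=1}^m \omega_j\bigl(pr_j(w_1) \wedge \cdots \wedge pr_j(w_p)\bigr) = 0$. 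The $j = m$ term is the nonzero number $\omega_m(u_1 \wedge \cdots \wedge u_p)$, but the other terms need not cancel it. To fix this I would argue more carefully: I want to modify the lifts using elements of $W' = W \cap \ker(pr_m)$ so as to kill the lower contributions, or else I would choose the $w_i$ so that $pr_j(w_i) = 0$ for $j < m$ whenever possible. The cleanest route is to observe that if $pr_m|_W : W \to V_m$ were surjective with kernel $W'$, then $W \cong W' \oplus V_m$ as a section, and pulling back $\Omega$ along any splitting $s : V_m \to W$ gives a $p$-form on $V_m$ of the shape $s^*\Omega = \omega_m + (\text{terms pulled back through } pr_j \circ s,\ j < m)$; the latter terms factor through the composite $V_m \xrightarrow{s} W \hookrightarrow V \xrightarrow{pr_j} V_j$, and if we choose $s$ so that its image lies in $W$ while $pr_j \circ s = 0$ for all $j < m$ — which we can do exactly when $W' $ has full complementary dimension, otherwise we induct differently — then $s^* \Omega = \omega_m \ne 0$, contradicting that $W$, hence $s(V_m) \subset W$, is isotropic. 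In the general case I would instead do a direct dimension count: replacing $W$ by a generic complement argument, or simply noting that the bilinear-form trick of Roitman reduces the $p$-form statement to iterated contraction — contract $\Omega$ with $p - 2$ generic vectors of $W$ to land in the $2$-form case and apply the standard codimension-one bound summand by summand. I expect the write-up to follow Roitman's original argument \cite{roitman1971gamma} closely, with the contraction-to-a-$2$-form reduction being the technical heart; the bookkeeping of which summands survive each contraction is the part requiring care.
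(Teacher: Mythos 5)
The paper itself gives no proof of this lemma---it is quoted verbatim from \cite[Lemma 9]{roitman1971gamma}---so the only question is whether your argument stands on its own. It does not: the inductive step has a genuine gap. You reduce everything to the claim that $pr_m(W)$ is a proper subspace of $V_m$, but that claim is false. Take $p=2$, $m=2$, $V_1=V_2=\R^2$ with $\omega_1=\omega_2=dx\wedge dy$, and let
$$
W=\set{\bigl((x,y),(x,-y)\bigr) \st x,y\in\R}.
$$
Then $W$ is isotropic for $\Omega=pr_1^*\omega_1+pr_2^*\omega_2$ (it is the graph of an anti-symplectic map, so the two contributions cancel), yet $pr_2(W)=V_2$ is all of the last factor. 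The lemma still holds here, since $\dim W=2=\dim V-m$, but only because the other term in your estimate, $\dim W'=\dim(W\cap\ker pr_2)=0$, sits strictly below the inductive bound $\dim V_1-(m-1)=1$: the two contributions compensate each other and cannot be bounded separately, which is exactly what your strategy requires. The same example defeats your ``cleanest route'': the unique section $s:V_2\to W$ of $pr_2|_W$ necessarily has $pr_1\circ s\neq 0$, and indeed $s^*\Omega=(pr_1\circ s)^*\omega_1+\omega_2=-\omega_2+\omega_2=0$ even though $\omega_2\neq 0$, so no contradiction arises.

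The remaining suggestions (a ``generic complement argument'', contracting with $p-2$ generic vectors of $W$ to reduce to the $2$-form case) are not carried out, and the counterexample above already lives in the $2$-form case, so no such reduction can rescue a summand-by-summand bound. What a correct proof must supply---and what is missing here---is the trade-off: when $pr_m|_W$ is surjective, the kernel $W\cap(V_1\oplus\cdots\oplus V_{m-1})$ must be shown to have codimension at least $m$ (not merely $m-1$) in $V_1\oplus\cdots\oplus V_{m-1}$, so that the surjectivity onto $V_m$ is paid for by extra codimension in the other factors. That is the actual content of Roitman's Lemma 9, and your write-up does not contain an argument for it.
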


\begin{thm}\label{thm:mainthm}
If a tropical affine manifold $B$ admits a non-zero tropical $p$-form for some $p \geq 2$, the group $\CH_0(B)_{\hom}$ is infinite-dimensional.
\end{thm}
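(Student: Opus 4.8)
The plan is to argue by contradiction along the classical Severi--Mumford--Roitman lines, feeding the structural description of $\mcal Z_{n,k}$ from \Cref{prop:Ziscountableunion} and the isotropy statement of \Cref{prop:Zisotropic} into the linear-algebra bound of \Cref{lem:roitmanbound}. So suppose the map $\Psi\colon B^n\x B^n\to\CH_0(B)_{\hom}$ of \Cref{eq:projection_to_CH0hom} were surjective for some $n$; I will extract from this a bound on $k$ that fails once $k$ is taken large. As a preliminary reduction I would assume $B$ connected with $\tilde\omega$ nowhere vanishing: the zero locus of a locally constant form is open and closed, so $\tilde\omega$ restricts to a nowhere-zero form on some component $B_0$, and since $\CH_0(B)=\bigoplus_\alpha\CH_0(B_\alpha)$ over the components of $B$ (a connected tropical curve in $B\x\R$ lies in a single $B_\alpha\x\R$), a short degree-counting argument shows that finite-dimensionality of $\CH_0(B)_{\hom}$ would force that of $\CH_0(B_0)_{\hom}$; so we may replace $B$ by $B_0$.

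Granting surjectivity of $\Psi$ on $B^n\x B^n$, for every $k$ the projection $\pi\colon\mcal Z_{n,k}\to B^{2k}$, $(\mathbf b^+,\mathbf b^-,\mathbf c^+,\mathbf c^-)\mapsto(\mathbf c^+,\mathbf c^-)$, is surjective: given $(\mathbf c^+,\mathbf c^-)\in B^{2k}$, the class $\Psi(\mathbf c^+,\mathbf c^-)\in\CH_0(B)_{\hom}$ equals $\Psi(\mathbf b^+,\mathbf b^-)$ for some $(\mathbf b^+,\mathbf b^-)\in B^{2n}$, and the resulting four-tuple lies in $\mcal Z_{n,k}$ over $(\mathbf c^+,\mathbf c^-)$. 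Writing $\mcal Z_{n,k}=\bigcup_i Z_i$ with each $Z_i$ open in an integral affine submanifold $A_i$ (\Cref{prop:Ziscountableunion}), I then extract one full-dimensional piece by a measure argument: each $\pi(Z_i)$ is the smooth image of a manifold of dimension $\dim Z_i$, hence has measure zero in $B^{2k}$ whenever $\dim Z_i<\dim B^{2k}=2k\dim B$; since the countably many sets $\pi(Z_i)$ cover the positive-measure manifold $B^{2k}$, there is some $i_0$ with $\dim Z_{i_0}\geq 2k\dim B$.

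Now fix any point $x=(x_1,\dots,x_{2n+2k})\in Z_{i_0}$ and set $W:=T_xZ_{i_0}=T_xA_{i_0}$ inside $V:=T_x(B^{2n}\x B^{2k})=\bigoplus_{j=1}^{2n+2k}T_{x_j}B$. By \Cref{prop:Zisotropic} the form $\omega$ of \Cref{eq:omega} vanishes on $Z_{i_0}$, so $W$ is isotropic for $\omega_x=\sum_{j=1}^{2n+2k}pr_j^*(\pm\tilde\omega_{x_j})$; since $\tilde\omega$ is nowhere zero, each summand $\pm\tilde\omega_{x_j}$ is a non-zero $p$-form on $T_{x_j}B$ with $p\geq2$, so \Cref{lem:roitmanbound} applied with $m=2n+2k$ gives
\[
\dim W\;\leq\;\dim V-(2n+2k)\;=\;(2n+2k)(\dim B-1).
\]
Combining with $\dim W=\dim Z_{i_0}\geq 2k\dim B$ and writing $d:=\dim B\geq p\geq 2$ yields $2kd\leq(2n+2k)(d-1)$, that is, $k\leq n(d-1)$. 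This must hold for every $k$, which is absurd (take $k=n(d-1)+1$); hence $\Psi$ is not surjective for any $n$, i.e.\ $\CH_0(B)_{\hom}$ is infinite-dimensional.

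The only step I expect to require genuine care is the passage from \Cref{prop:Ziscountableunion} to a single piece $Z_{i_0}$ of dimension at least $2k\dim B$: one must be sure that a countable union of lower-dimensional affine pieces cannot project onto the manifold $B^{2k}$. The measure (or Baire category) argument above handles this, but it is the one place where the combinatorial structure of the moduli spaces of curves is converted into the dimension inequality driving the contradiction; the remaining steps are direct applications of \Cref{prop:Zisotropic} and \Cref{lem:roitmanbound}.
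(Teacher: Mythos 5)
Your proposal is correct and follows essentially the same route as the paper's own proof: combine the countable decomposition of $\mcal Z_{n,k}$ from \Cref{prop:Ziscountableunion} with the isotropy statement of \Cref{prop:Zisotropic} and the bound of \Cref{lem:roitmanbound}, then observe that a countable union of pieces of dimension at most $(2n+2k)(\dim B -1)$ cannot project onto $B^{2k}$ once $k>n(\dim B -1)$. The only differences are presentational: you argue by contradiction rather than directly, you spell out the measure-zero step that the paper leaves implicit, and you add the (harmless, and in fact slightly more careful) reduction to a connected component on which the locally constant form $\tilde\omega$ is nowhere vanishing, which is what legitimizes applying \Cref{lem:roitmanbound} at every point.
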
 

\begin{proof}
    Let $n\in \N$ arbitrary; we will show the map $\Psi:B^n\x B^n \to \CH_0(B)_{\hom}$ of \Cref{eq:projection_to_CH0hom} is not surjective.
    Note that $\Psi$ is surjective if and only if the projection
    $$
    \mcal Z_{n,k} \subset B^{2n} \x B^{2k} \to B^{2k}
    $$
    is surjective for all $k \in \N$.
    Let $0\neq \tilde \omega \in H^0(\wedge^p_\Z T^*B)$ be a non-zero tropical $p$-form and let $\omega$ be the form of \Cref{eq:omega}.
    It follows from \Cref{prop:Ziscountableunion} that $\mcal Z_{n,k} = \cup_i Z_i$ decomposes as a countable union.
    Furthermore, each $Z_i$ is isotropic for $\omega$ by \Cref{prop:Zisotropic}, thus
    $$
    \dim Z_i \leq \dim (B^{2n}\x B^{2k}) - (2n + 2k)
    $$ 
    by \Cref{lem:roitmanbound}.
    We have that $\mcal Z_{n,k} = \cup_i Z_i$ is a countable union of submanifolds of bounded dimension, hence the projection $\mcal Z_{n,k} \to B^{2k}$ is surjective if and only if at least one of the restrictions $Z_i \to B^{2k}$ is (the image of each $Z_i$ is an integral affine submanifold of $B^{2k}$, so if each such image is strictly lower dimensional, a countable union of them is also strictly lower dimensional).
    Now choose $k\in \N$ such that $\dim B^{2n} - 2n < 2k$, i.e. $\dim B^{2n} - 2n - 2k < 0$.
    Then we have
    \begin{align*}
         \dim Z_i &\leq \dim (B^{2n}\x B^{2k}) - (2n + 2k) \\
        & = \dim B^{2k} + (\dim B^{2n} - 2n - 2k)\\
        & < \dim B^{2k}
    \end{align*}
    so the projection cannot be surjective.
\end{proof}

\subsection{Tropical Chow group of the Klein bottle}\label{sec:chowklein}
In this section we consider a certain family of tropical Klein bottles and show their Chow group of points are finite-dimensional.
This shows that the existence of a non-zero tropical $1$-form does not imply the tropical Chow group is infinite-dimensional.

Let $\Gamma = \vspan{a,b |aba = b}$ be the fundamental group of a Klein bottle, and consider the injection $\Gamma \into \Aff_\Z (\R^2)$ given by \Cref{eq:injectionKlein}.
Let $K$ be the quotient
$$
K := \R^2/\Gamma
$$
with the induced integral affine structure.

\begin{figure}
  \centering
  \begin{tikzpicture}[thick]
    \begin{scope}[]
\node at (-4.5,-0.5) {$K$};

    \begin{scope}[shift={(-0.5,-10)}]
    \draw  (-3.5,9) rectangle (0.5,5);
    \node[scale = 1.5] at (-1.5,9) {$>>$};
    \node[scale = 1.5] at (-1.5,5) {$>>$};
    \node[rotate = 90,scale = 1.5] at (-3.5,7) {$>$};
    \node[rotate = 90,scale = 1.5] at (0.5,7) {$<$};
    \end{scope}
\begin{scope}[shift={(-0.5,-10)}]
    \filldraw[red]  (-2.5,7.5)  circle (2pt) node [anchor=south west] {$p$};
    \filldraw[red]  (-2.5,6.5)  circle (2pt) node [anchor=north west] {$\iota(p)$};
    \end{scope}
    \end{scope}

\draw[blue] (-4,-2.5) -- (0,-2.5);
    \draw[blue] (-4,-3.5) -- (0,-3.5);

    \node[blue] at (-0.5,-2) {$F^2_y$};

\begin{scope}[shift={(7,0)}]
    \node at (-4.5,-0.5) {$K$};
\begin{scope}[shift={(-0.5,-10)}]
    \draw  (-3.5,9) rectangle (0.5,5);
    \node[scale = 1.5] at (-1.5,9) {$>>$};
    \node[scale = 1.5] at (-1.5,5) {$>>$};
    \node[rotate = 90,scale = 1.5] at (-3.5,7) {$>$};
    \node[rotate = 90,scale = 1.5] at (0.5,7) {$<$};
    \end{scope}
\begin{scope}[shift={(-0.5,-10)}]
    \filldraw[red]  (-2.5,7.5)  circle (2pt) node [anchor=south west] {$p$};
    \filldraw[red]  (-2.5,6.5)  circle (2pt) node [anchor=north west] {$\iota(p)$};
    \end{scope}
    \end{scope}

    \draw[blue] (4,-1) -- (4,-5);
    \filldraw[red]  (4,-3)  circle (2pt) node [anchor=west] {$s(\theta)$};

    \node[blue] at (4.5,-1.5) {$F^1_\theta$};
  \end{tikzpicture}
  \caption{Left: geometry of the relation $2(p-\iota(p))\sim 0$. Right: geometry of the relation $4p - 4s(\theta) \sim 0$.}
  \label{fig:kleinrelations}  
\end{figure}

\begin{prop}\label{pr:albKleiniso}
The tropical Albanese map
$$
\alb:\CH_0(K)_{\hom}\to \Alb(K)
$$
is an isomorphism. 
In particular, $\CH_0(K)_{\hom}$ is finite-dimensional.
\end{prop}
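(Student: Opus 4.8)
The plan is to show the tropical Albanese map is both surjective and injective. Surjectivity is essentially formal: since $H^0(T^*_\Z K)$ is one-dimensional (the invariant $1$-form is $dx$, as $y$ is flipped by $b$), one has $\Alb(K) \cong \R / \Lambda$ for a lattice $\Lambda$ coming from $H_1(K;\Z)$ under integration of $dx$, and one checks directly that for any target value $\xi \in \Alb(K)$ there are points $p^+, p^-$ with a path $\gamma$, $\partial\gamma = p^+ - p^-$, realizing $\int_\gamma dx = \xi$; any two points with the same $x$-coordinate difference (mod $\Lambda$) already suffice. So the real content is \emph{injectivity}: if $b^+ - b^- \in \CH_0(K)_{\hom}$ lies in the kernel of $\alb$, it must already be zero in $\CH_0(K)_{\hom}$, i.e. there must be a tropical curve in $K \times \R$ horizontal at infinity exhibiting the linear equivalence.

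First I would reduce a general degree-zero cycle to a standard form. Using horizontal-at-infinity tropical curves in $K \times \R$ (the building blocks of $R_0(K)$), I would establish the two relations depicted in \Cref{fig:kleinrelations}: namely $2(p - \iota(p)) \sim 0$, where $\iota$ is the involution $(x,y) \mapsto (x,-y)$ coming from $b$, realized by a tropical curve running along a fiber $F^2_y = \{x = \text{const}\}$ (two copies of a circle fiber glued appropriately); and $4p - 4 s(p) \sim 0$, where $s(p)$ is a reference point on a fixed $y$-fiber $F^1_\theta$, realized by a curve along such a fiber. More generally, I would show that translating a point along the flat directions (moving $x$, moving $y$) only changes its class by elements controlled by the Albanese, so that every cycle in $\CH_0(K)_{\hom}$ is equivalent to an integer multiple of $p - p_0$ for a fixed basepoint $p_0$, plus a torsion-like contribution absorbed by the above relations. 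The key geometric input here is that the fibers of $K \to S^1_x$ are circles (tori $T^1$), so tropical curves wrapping these fibers behave like the $1$-dimensional tropical Abel--Jacobi theorem \cite{mikhalkin2008tropical}, and the monodromy $y \mapsto -y$ forces the finite relations above.

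The main obstacle — and the step I'd spend the most care on — is ruling out that $\CH_0(K)_{\hom}$ is strictly larger than $\Alb(K)$: a priori the Albanese only sees $H^0(T^*_\Z K) = \langle dx \rangle$, so I must show there is no "extra" invariant detecting cycles that move only in the $y$-direction. This is exactly where non-orientability helps: because $H^0(\wedge^2 T^*_\Z K) = 0$, the obstruction-theoretic argument of \Cref{thm:mainthm} produces \emph{no} $p$-form with $p \geq 2$, and more relevantly there is simply not enough room for an additional functional on $\CH_0(K)_{\hom}$ beyond $dx$. Concretely, I would argue that any two points of $K$ are linearly equivalent up to the image of $\Alb$ by explicitly connecting them with horizontal-at-infinity tropical curves: move in the $x$-direction (detected, consistently, by $\alb$) and in the $y$-direction (where the monodromy-induced relations $2(p-\iota(p)) \sim 0$ and the fiberwise Abel--Jacobi relations kill any residual class). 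Combining the reduction to multiples of $p - p_0$ with the fact that $\alb(p - p_0)$ already runs over all of $\Alb(K)$, and that $\alb$ is well-defined, forces the kernel to vanish. I expect that verifying the balancing condition and the horizontal-at-infinity condition for the explicit curves realizing these relations (especially across the gluing induced by $aba = b$) will be the most delicate bookkeeping, but conceptually it reduces to the classification of such curves on the universal cover $\R^2$ together with $\Gamma$-equivariance.
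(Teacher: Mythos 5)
Your overall strategy is the same as the paper's: exploit the two coordinate projections of $K$, establish the relations $2(p-\iota(p))\sim 0$ and $4p-4s(\theta)\sim 0$ via the fiberwise tropical Abel--Jacobi theorem, and reduce every $0$-cycle to the section circle $s(B_1)$, where the one-dimensional Abel--Jacobi theorem finishes the job. (The paper packages this as: $s_*:\CH_0(B_1)\to\CH_0(K)$ is an isomorphism, injectivity being formal from $p_1\circ s=\id$ and surjectivity being the content; your injectivity-of-$\alb$ phrasing is equivalent.) However, there is a genuine gap at the decisive step. The two relations you propose only give $4(p-s(\theta))\sim 0$ in $\CH_0(K)_{\hom}$; they do not by themselves give $p\sim s(\theta)$, since a priori $\CH_0(K)_{\hom}$ could contain $2$- or $4$-torsion classes in $\ker(\alb)$. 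Your proposed way around this --- that $H^0(\wedge^2 T^*_\Z K)=0$ leaves ``not enough room for an additional functional beyond $dx$'' --- is not an argument: the absence of higher tropical forms yields no upper bound on $\CH_0$ (it is the hypothesis of \Cref{thm:mainthm}, not a converse), and in particular cannot exclude torsion invisible to $\alb$. The paper closes this gap with a divisibility trick you would need to supply: given $p=(\theta,y)$, apply the degree-$4$ relation not to $p$ but to the quarter-point $q=(\theta,y/4)$; by the Abel--Jacobi isomorphism $\CH_0(F^1_\theta)_{\hom}\cong S^1$ one has $p-s(\theta)\sim 4(q-s(\theta))$ already on the fiber, and then $4q-4s(\theta)\sim 0$ in $\CH_0(K)_{\hom}$ forces $p\sim s(\theta)$.

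Two smaller corrections: the fiber carrying the relation $2(p-\iota(p))\sim 0$ is $F^2_y=p_2^{-1}(y)$, the ``horizontal'' circle of constant $|y|$ (of circumference $2x_0$, on which $p$ and $\iota(p)$ differ by $x_0$), not $\{x=\mathrm{const}\}$ as you wrote; and the reduction should land on ``a difference $s(\theta)-s(\theta')$ of points of the section'', not ``an integer multiple of $p-p_0$'' --- the group $\CH_0(K)_{\hom}\cong S^1$ is divisible but not cyclic, so the latter normal form is both wrong and insufficient for concluding injectivity.
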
 
\begin{proof}
    Write $p_1: K \to B_1$ (resp. $p_2: K \to B_2$) for the projection onto the first (resp. second) coordinate.
    Note that $B_1 = \R/(x\sim x+ x_0) \simeq S^1$ and
    $$
      B_2 = \frac{\R}{y\sim y+ y_0,y \sim -y} \simeq [0,y_0/2].
    $$
    We will show that the tropical section
    \begin{align*}
      s:B_1 \simeq S^1 & \into K\\
    \theta & \mapsto (\theta,0)  
    \end{align*}
    of $p_1$ induces (by push forward) an isomorphism on $\CH_0$. 
    
    Since $s$ is a section of $p_1$, functoriality of Chow groups implies injectivity. 
    To prove surjectivity, we first show that for any point $p \in K$ one has
    $$
    2(p - \iota(p)) \sim 0
    $$
    in $\CH(K)$, where $\iota: K \to K$ is a reflection along the $x$-axis.
    Indeed, $p - \iota(p)$ lies in a fiber $F^2_y := p_2\inv(y) \simeq S^1$ of the projection $p_2$; note that $F^2_y$ is a tropical curve in $K$ (see the left part of \Cref{fig:kleinrelations}).
    It is easy to check that $2(p-\iota(p))$ is in the kernel of $\CH(F^2_y)_{\hom} \to \Alb(F^2_y)$, and since this map is an isomorphism by the tropical Abel theorem \cite[Theorem 6.2]{mikhalkin2008tropical} we conclude $2(p-\iota(p))\sim 0$ in $\CH(F^2_y)_{\hom}$. Pushing forward this relation under the inclusion $F^2_y \into K$ one obtains the result.

    Now let $p \in K$ be any point and denote $\theta = p_1(p)$.
    We consider the homologically trivial $0$-cycle
    $$
    4p - 4s(\theta) \sim 2p + 2\iota(p) - 4s(\theta), 
    $$
    where we have used the previous relation $2(p - \iota(p))\sim 0$.
    By definition of $\iota$ one has that 
    $$
    p + \iota(p) - 2s(\theta) = (p - s(\theta)) - (s(\theta) - \iota(p))
    $$
    lies in the kernel of $\CH_0(F^1_\theta)_{\hom} \to \Alb(F^1_\theta)$ (see right part of \Cref{fig:kleinrelations}), hence so does $4p - 4s(\theta)$.
    By the tropical Abel-Jacobi theorem, $4p \sim 4s(\theta)$ in $\CH_0(F^1_\theta)_{\hom}$, and  divisibility of $\CH_0(F^1_\theta)_{\hom} \simeq S^1$ further implies $p \sim s(\theta)$.
    Pushing forward this relation under the inclusion $F^1_\theta \into K$ yields the same result in $\CH_0(K)_{\hom}$, thus proving surjectivity.
\end{proof}

\begin{cor}
Let $K$ be a tropical Klein bottle. 
Then $\CH_0(K)=\Z\oplus S^1$
\end{cor}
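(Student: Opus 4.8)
The plan is to deduce the corollary from \Cref{pr:albKleiniso} together with an explicit computation of the tropical Albanese variety of $K$. First I would invoke the degree homomorphism $\CH_0(K) \to \Z$, $\sum_i n_i[p_i] \mapsto \sum_i n_i$; this is well-defined because every generator $\sum_i(b_i^+ - b_i^-)$ of $R_0(K)$ involves equally many positive and negative points (the balancing condition gives $\sum_{e\in E_{\pm\infty}}\pm w_e = 0$), it is clearly surjective, and its kernel is $\CH_0(K)_{\hom}$ by definition. Since $\Z$ is free, any choice of basepoint $p_0\in K$ splits the resulting sequence
$$
0 \to \CH_0(K)_{\hom} \to \CH_0(K) \xra{\deg} \Z \to 0 ,
$$
so $\CH_0(K) \cong \Z \oplus \CH_0(K)_{\hom}$, and by \Cref{pr:albKleiniso} the second summand is $\Alb(K)$.

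It then remains to check that $\Alb(K)\cong S^1$. The universal cover of $K$ is $\R^2$ and the monodromy group is generated by the linear part of the deck transformation $b$ of \Cref{eq:injectionKlein}, namely $\mathrm{diag}(1,-1)\in\GL(2,\Z)$. An integral covector $\alpha\,dx+\beta\,dy$ on $\R^2$ is monodromy-invariant iff $\beta=0$, so $H^0(T^*_\Z K)=\Z\,dx$ and $\Hom(H^0(T^*_\Z K),\R)\cong\R$ via evaluation on $dx$. For the lattice, abelianizing $\Gamma=\vspan{a,b\mid aba=b}$ yields the relation $2a=0$, hence $H_1(K;\Z)\cong\Z\langle b\rangle\oplus(\Z/2)\langle a\rangle$. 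The class $a$ is represented by a loop in the $y$-direction, so $\int_a dx=0$, while $b$ is represented by a path from $(0,y')$ to $(x_0,-y')$, so $\int_b dx=x_0\neq 0$ (note $x_0\neq 0$ since $b$ acts freely). Thus the integration map $H_1(K;\Z)\to\R$ has image $x_0\Z$ and, as it must, annihilates the $2$-torsion, so
$$
\Alb(K) = \frac{\Hom(H^0(T^*_\Z K),\R)}{H_1(K;\Z)} = \R/x_0\Z \cong S^1 .
$$

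Combining the two steps gives $\CH_0(K)\cong\Z\oplus S^1$. I do not expect any real obstacle here: the only points needing (routine) care are that the splitting of the degree sequence is a non-canonical choice — so the statement is about isomorphism type — and the bookkeeping of the integration pairing on $H_1(K;\Z)$, both elementary once the monodromy of $K$ is read off from \Cref{eq:injectionKlein}. One minor caveat is that \Cref{pr:albKleiniso} is proved for the type~I family of \Cref{eq:injectionKlein}, so the corollary as stated should be read for that family (or else one would additionally need the analogue of \Cref{pr:albKleiniso} and of the above Albanese computation for Sepe's other tropical Klein bottles).
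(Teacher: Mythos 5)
Your argument is correct and follows essentially the same route as the paper: split the degree exact sequence $0 \to \CH_0(K)_{\hom} \to \CH_0(K) \to \Z \to 0$, apply \Cref{pr:albKleiniso}, and reduce to showing $\Alb(K)\cong S^1$ via the monodromy-invariant covector $dx$. You simply carry out in detail the computation of $H^0(T^*_\Z K)$, $H_1(K;\Z)$ and the integration pairing that the paper declares ``immediate,'' and your caveat about the statement implicitly referring to the type~I family is consistent with the paper's conventions.
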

\begin{proof}
Using the split short exact sequence
$$
0 \to \CH_0(K)_{hom} \to \CH_0(K) \to \Z \to 0
$$
and \Cref{pr:albKleiniso}, the statement of the Corollary is equivalent to $\Alb(K)=S^1$.
This is an immediate consequence of $H^0(T_\Z^* K)=\Z$, generated by the invariant covector of the monodromy.
\end{proof}

\renewbibmacro{in:}{}
\def\bibrangedash{ -- }
\printbibliography

\end{document}